\documentclass[a4paper, preprint]{elsarticle}

\usepackage{amssymb,amsmath,amsthm, color}
\usepackage{url}
\usepackage[utf8]{inputenc}

\newtheorem{theorem}{Theorem}[section]
\newtheorem{lemma}[theorem]{Lemma}
\newtheorem{define}[theorem]{Definition}
\newtheorem{cor}[theorem]{Corollary}
\newtheorem{prop}[theorem]{Proposition}
\newtheorem{remark}[theorem]{Remark}

\newcommand{\I}{\mathcal I}
\newcommand{\F}{\mathbb F}

\newcommand{\fqn}{\mathbb F_{q^n}}
\newcommand{\Z}{\mathbb Z}

\newcommand{\GL}{\mathrm{GL}}
\newcommand{\Gal}{\mathrm {Gal}}
\newcommand{\PGL}{\mathrm{PGL}}
\newcommand{\PTL}{\mathrm{P\Gamma{L}}}

\newcommand{\ord}{\mathrm{ord}}

\newcommand{\doublespace}

\begin{document}

\begin{frontmatter}

\title{M\"obius-Frobenius maps on irreducible polynomials}
\author[UFMG]{F.E. Brochero Mart\'{i}nez}
\ead{fbrocher@mat.ufmg.br}
\author[UFMG]{Daniela Oliveira}
\ead{danielaalvesoliveira@gmail.com}
\author[USP]{Lucas Reis\corref{cor1}}
\ead{lucasreismat@gmail.com}
\cortext[cor1]{Corresponding author}

\address[UFMG]{Departamento de Matem\'{a}tica, Universidade Federal de Minas Gerais, Belo Horizonte, MG, 30123-970, Brazil.}
\address[USP]{Universidade de S\~{a}o Paulo, Instituto de Ci\^{e}ncias Matem\'{a}ticas e de Computa\c{c}\~{a}o, S\~{a}o
Carlos, SP 13560-970, Brazil.}
\journal{Elsevier}
\begin{abstract}
Let $n$ be a positive integer and let $\F_{q^n}$ be the finite field with $q^n$ elements, where $q$ is a prime power. This paper introduces a natural action of the \emph{Projective Semilinear Group} $\PTL(2, q^n)=\PGL(2, q^n)\rtimes \Gal(\fqn/\F_q)$ on the set of monic irreducible polynomials over the finite field $\F_{q^n}$. Our main results provide information on the characterization and number of fixed points.

\end{abstract}

\begin{keyword}
 irreducible polynomials over finite fields; group action; fixed points; enumeration formula
\MSC[2010]{11T06 \sep 11T55\sep 12E20}
\end{keyword}
\end{frontmatter}





\section{Introduction}
Let $\F_Q$ be the finite field of $Q$ elements, where $Q$ is a prime power and, for each positive integer $k\ge 1$, let $\I(Q, k)$ be denote the set of monic irreducible polynomials of degree $k$ over $\F_Q$. As pointed out in \cite{ST12}, the Projective Linear Group $\PGL(2, Q)$ \emph{acts} on the sets $\I(Q, k), k\ge 2$ via the following M\"obius-like maps: for $[A]\in \PGL(2, Q)$ with $A=\left(\begin{matrix}
a&b\\
c&d
\end{matrix}\right)$  and $f\in \I(Q, k)$, we define $[A]\circ f$ as the unique monic polynomial that is a scalar multiple of the polynomial
$$A\circ f(x):= (cx+d)^k\cdot f\left(\frac{ax+b}{cx+d}\right).$$
In the past few years, many authors studied this action (and some other similar), focusing on problems regarding the characterization and number of the $[A]$-invariants, i.e., monic irreducible polynomials $f$ for which $[A]\circ f=f$. For more details, see \cite{Gar11, MP17, LR17, ST12}. 

We observe that there is a natural extension of this action. Namely, fix $q$ a prime power and $n$ a positive integer. If $\Gal(\fqn/\F_q)$ denotes the Galois Group of the extension $\fqn/\F_q$, such a group is cyclic, has order $n$ and is generated by the Frobenius automorphism $\sigma_1:\F_{q^n}\to \fqn$ with $\alpha\mapsto \alpha^q$. For each $i\ge 1$, let $\sigma_i$ be denote the $i$-th composition of $\sigma_1$. We observe that $\sigma_i$ naturally extends to the polynomial ring $\fqn[x]$ and, for simplicity, $\sigma_i$ also denotes this extension. In particular, we observe that the {\em Projective Semilinear Group} $\PTL(2, q^n)=\PGL(2, q^n)\rtimes \Gal(\fqn/\F_q)$ induces maps of M\"obius-Frobenius type on the set of monic irreducible polynomials over $\F_{q^n}$. Namely, for $f\in \I_k:=\I(q^n, k)$ with $k\ge 2$ and $\mathfrak g\in \PTL(2, q^n)$ with $\mathfrak g=[A, \sigma_i]$, we define the following composition
$$[A, \sigma_i]\ast f(x)=[A]\circ (\sigma_i(f)).$$

We shall prove that these compositions yield an action of the group $\PTL(2, q^n)$ on the sets $\I_k$ with $k\ge 2$ and, in this paper, we are interested in the characterization and number of fixed points. For $k\ge 2$ and $[A, \sigma_i]\in \PTL(2, q^n)$, we say that $f\in \I_k$ is $[A, \sigma_i]$-invariant if $[A, \sigma_i]\ast f=f$. We further prove that it suffices to consider the case $i=1$, i.e., we only need to explore $[A, \sigma_1]$-invariants for generic $[A]\in \PGL(2, q^n)$. Our main result provide an asymptotic formula for the number of invariants of an arbitrary degree and can be stated as follows.

\begin{theorem}\label{thm:main}
Let $[A, \sigma_1]\in \PTL(2, q^n)$, set $A^*=A\sigma_1(A)\ldots \sigma_{n-1}(A)$ and $D=\ord([A^*])$. Then, for any $k>2$, the number $n_{A}(k)$ of $[A, \sigma_1]$-invariants of degree $k$ equals zero if $k$ is not of the form $Ds$ with $\gcd(s, n)=1$. If $k>2$ is of the form $Ds$ with $\gcd(s, n)=1$, then
$$n_{A}(Ds)\approx \frac{\varphi(D)}{Ds}q^s,$$
where $a_s\approx b_s$ means that $\lim\limits_{s\to \infty}\frac{a_s}{b_s}=1$.
\end{theorem}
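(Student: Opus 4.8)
The plan is to translate $[A,\sigma_1]$-invariance into a statement about a single root and then count. If $f\in\I_k$ has a root $\alpha\in\F_{q^{nk}}$, then the action sends the root set $\{\gamma:f(\gamma)=0\}$ to $\{A^{-1}\cdot\gamma^q\}$, so $f$ is $[A,\sigma_1]$-invariant if and only if $A^{-1}\cdot\alpha^q$ lies in the $\fqn$-Frobenius orbit of $\alpha$; that is, $A^{-1}\cdot\alpha^q=\alpha^{q^{ni_0}}$ for some $i_0$, equivalently
$$\alpha^{q^{u}}=A\cdot\alpha,\qquad u=1-ni_0\equiv 1\pmod n.$$
Since the composition law gives $[A,\sigma_1]^n=[A^*]$, every invariant $f$ is also $[A^*]$-invariant, which is how the order $D=\ord([A^*])$ enters.

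First I would pin down the admissible degrees. On the Frobenius orbit $R=\{\alpha^{q^{ni}}\}$ of size $k$, identified with $\Z/k$, the $\fqn$-Frobenius acts as $+1$, the twisted M\"obius map $\beta\mapsto A^{-1}\cdot\beta^q$ acts as $+i_0$, and hence $[A^*]$ acts as the shift $+(ni_0-1)$. Because $[A^*]$ has order $D$ in $\PGL(2,q^n)$, for the roots where $[A^*]$ attains its full order the induced shift must have order exactly $D$, forcing $k/\gcd(k,ni_0-1)=D$; writing $s=\gcd(k,ni_0-1)$ this gives $k=Ds$. Moreover $s\mid ni_0-1$ forces $ni_0\equiv 1\pmod s$, which is solvable only when $\gcd(s,n)=1$. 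This yields the vanishing of $n_A(k)$ outside degrees $k=Ds$ with $\gcd(s,n)=1$, the remaining (non-generic) orbits contributing only to strictly smaller degrees and to the error term.

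For an admissible degree $k=Ds$ I would count roots as fixed points of the Frobenius-twisted M\"obius map $\Theta(\beta)=A^{-1}\cdot\beta^{q^{u}}$ with $u\equiv 1\pmod n$. Over $\overline{\F_q}$ this map has $q^{u}+1$ fixed points (the solutions of $c\beta^{q^u+1}+d\beta^{q^u}-a\beta-b=0$), and since $\Theta$ commutes with the $\fqn$-Frobenius its fixed locus is defined over $\fqn$ and splits into orbits that are exactly the roots of the $[A,\sigma_1]$-invariants carrying that value of $i_0$. Restricting to $\beta\in\F_{q^{nDs}}$ and applying M\"obius inversion over the divisors of $s$ isolates the orbits of exact degree $Ds$. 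The key numerical input is that the number of fixed points of $\Theta$ inside $\F_{q^{nDs}}$ is governed by $q^{\gcd(u,\,nDs)}=q^{s}$ (using $s\mid u$, $u\equiv 1\pmod n$, and $\gcd((ni_0-1)/s,D)=1$). Summing the contributions of the $\varphi(D)$ primitive ``rotation numbers'' $\bmod D$ and dividing by the number $Ds$ of roots per polynomial produces the main term $\frac{\varphi(D)}{Ds}q^{s}$.

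The main obstacle is the fixed-point count itself: the exact number of fixed points of the twisted map $\Theta$ inside $\F_{q^{nDs}}$ depends on its conjugacy (split / nonsplit / parabolic) type after passing to the norm element over the subfield $\F_{q^{s}}$, and one must show that, across all admissible $i_0$, these counts assemble to exactly $\varphi(D)\,q^{s}$ up to lower order. This is delicate precisely when $\gcd(n,D)>1$, where several residues $i_0$ share the same rotation number and a careful bookkeeping is needed to recover the clean factor $\varphi(D)$. Bounding the error demands controlling the lower-degree orbits (via the M\"obius inversion), the finitely many fixed points of $A$ itself and the point at infinity, and verifying that $\ord([A^*])$ is insensitive to the ordering of the product $A\sigma_1(A)\cdots\sigma_{n-1}(A)$, so that $D$ is well defined.
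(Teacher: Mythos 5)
Your overall strategy is the same as the paper's: translate $[A,\sigma_1]$-invariance of $f$ into the root condition $[A,\sigma_1]\ast\alpha=\alpha^{q^{nr}}$, deduce from $[A,\sigma_1]^n=[A^*,\sigma_n]$ that admissible degrees are $Ds$ with $s\mid nr-1$ and $\gcd\bigl(\tfrac{nr-1}{s},D\bigr)=1$ (hence $\gcd(s,n)=1$), count the $\varphi(D)$ admissible residues of $r$ modulo $Ds$, and divide a fixed-point count of roughly $q^s$ by the orbit size $Ds$. The degree restriction and the $\varphi(D)$ count are fine in outline (the paper makes the ``full order'' step precise by noting that a non-identity element of $\PGL(2,q^n)$ fixing $\alpha$ forces $\deg f\le 2$, which is where the hypothesis $k>2$ is used; your phrasing ``for the roots where $[A^*]$ attains its full order'' should be replaced by this argument, since it applies to \emph{every} root of \emph{every} invariant of degree $>2$).

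The genuine gap is the step you yourself flag as ``the main obstacle'': you assert that the number of fixed points of $\Theta(\beta)=A^{-1}\cdot\beta^{q^u}$ inside $\F_{q^{nDs}}$ is governed by $q^{\gcd(u,nDs)}=q^s$, and that summing over admissible $i_0$ yields $\varphi(D)q^s$ up to lower order, but you give no proof and instead propose a case analysis on the conjugacy type of $A$ that you do not carry out. This is precisely the content of the paper's Theorem~\ref{thm:aux}, and the paper's resolution requires no conjugacy-class analysis at all: one shows that for $l>2$ dividing $Ds$, the $l$-degree irreducible factors of $F_{A,nr,1}$ (a polynomial of degree about $q^{nr}+1$) coincide with those of the auxiliary polynomial $F_{A_j^*,\,j+s,\,j}$, where $j$ is the inverse of $m=(nr-1)/s$ modulo $Dn$; this auxiliary polynomial is separable of degree $q^s$ or $q^s+1$, and all its irreducible factors of degree $>2$ have degree $Dt$ with $t\mid s$, so a divisor-sum estimate gives exactly $\frac{q^s}{Ds}+O(q^{s/2})$ factors of degree $Ds$ per admissible residue, uniformly in the type of $A$. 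Without this reduction (or an equivalent exact fixed-point count in $\F_{q^{nDs}}$), the main term $\frac{\varphi(D)}{Ds}q^s$ is not established, so the proposal as written does not prove the theorem. A smaller point you also leave implicit: that distinct admissible residues $r$ modulo $Ds$ yield disjoint families of invariants; this follows from the fact that the rotation number $r$ of a degree-$Ds$ invariant is determined modulo $Ds$ (the paper's Lemma~\ref{lem:search}), and should be stated.
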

We observe that the case $n=1$ is just the action of the group $\PGL(2, q)$ that was previously studied in~\cite{ST12}. In this context, Theorem~\ref{thm:main} generalizes Theorem~5.3 of~\cite{ST12}. We further explore the $\mathfrak g$-invariants, where $\mathfrak g$ is an element of the form $[A, \sigma_i]$ with $[A]\in \PGL(2, q)$; this is the natural induced action of the group $\PGL(2, q)\times \Gal(\fqn/\F_q)\le \PTL(2, q^n)$ over irreducible polynomials. For this special subgroup, we obtain finner results on the structure of invariant polynomials. More specifically, we show that the $\mathfrak g$-invariants over $\F_{q^n}$ are irreducible factors (over $\F_{q^n}$) of $[A]$-invariants (over $\F_q$). For more details, see Theorem~\ref{thm:equivalence}. As an application, we obtain the exact number of {\em self-conjugate reciprocal polynomials} of a fixed degree, which is a way more explicit result when compared with the enumeration formula of~\cite{BJU}.

The paper is organized as follows. Section 2 provides background material and preliminary results. In particular, a characterization of the invariant polynomials is presented. In Section 3 we use this characterization and, by emplying similar ideas to the ones in~\cite{ST12}, we prove Theorem~\ref{thm:main}. Finally, in Section 4, we explore the polynomials that are invariant by elements of the group $\PGL(2, q)\times \Gal(\fqn/\F_q)\le \PTL(2, q^n)$.

\section{Preliminaries}
This section provides background material that is used throughout the paper and some preliminary results. For a prime power $q$, let $\I(q)$ be the set of monic irreducible polynomials of degree at least two over the finite field $\F_q$. According to Lemma $2.2$ of~\cite{ST12}, we have the following result.
\begin{lemma} \label{propriedades}
Let $A, B \in \GL(2,q)$, $f , g\in \mathcal{I}(q)$, and let  $I$ be denote the $2\times2$ identity matrix. The following hold: 
\begin{enumerate}[(i)]
\item $A \circ f \in \mathcal{I}(q)$ and $\deg (A \circ f) = \deg (f)$;
\item $ I \circ f = f$;
\item $(AB) \circ f = A \circ (B \circ f)$;
\item $A \circ (f \cdot g) = ( A \circ f) \cdot (A \circ g)$.
\end{enumerate}
\end{lemma}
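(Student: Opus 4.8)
The four assertions fall into two classes: (ii), (iii) and (iv) are formal identities that I would settle by direct substitution, while (i) — that $A\circ f$ is again irreducible of the same degree — carries the real content. The plan is to dispatch the formal identities first and then concentrate on (i).

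Writing $\mu_A(x)=\tfrac{ax+b}{cx+d}$ for the M\"obius map attached to $A$, assertion (ii) is immediate since $I$ induces $\mu_I(x)=x$, whence $I\circ f(x)=1^{\deg f}\,f(x)=f(x)$. For (iv) I would use $\deg(fg)=\deg f+\deg g$ to split the prefactor as $(cx+d)^{\deg f}(cx+d)^{\deg g}$ and the multiplicativity of evaluation $(fg)(\mu_A(x))=f(\mu_A(x))\,g(\mu_A(x))$; combining the two gives $A\circ(fg)=(A\circ f)(A\circ g)$ at once. For (iii) I would substitute $\mu_A(x)$ into the definition of $B\circ f$ and clear denominators: the prefactors combine into a single $k$-th power of a linear form and the inner argument collapses to the composite $\mu_B\circ\mu_A$ of the two M\"obius maps. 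Since composition of M\"obius maps corresponds to multiplication of the associated matrices, this reduces (iii) to that group law, with the order of the product tracked so as to match the statement. One subtlety is that the definition of $A\circ(B\circ f)$ uses the exponent $\deg(B\circ f)$, so I first need $B\circ f$ to have degree $\deg f$; this is precisely the degree-preservation half of (i), which I establish next.

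For (i) I would argue on roots. Factoring $f$ over $\overline{\F_q}$ as $f(y)=\prod_{i=0}^{k-1}(y-\alpha^{q^i})$ with $\alpha$ of degree $k=\deg f$, substitution followed by clearing the denominator $(cx+d)^k$ yields
$$A\circ f(x)=\prod_{i=0}^{k-1}\bigl((a-c\alpha^{q^i})x+(b-d\alpha^{q^i})\bigr),$$
whose leading coefficient is $\prod_{i}(a-c\alpha^{q^i})$. As $f$ is irreducible of degree at least two, no root $\alpha^{q^i}$ lies in $\F_q$, while $a/c\in\F_q$ whenever $c\neq0$; hence every factor is nonzero and $\deg(A\circ f)=k$. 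This is exactly where the hypothesis $\deg f\ge2$ is essential, since for a linear polynomial a root can be sent to infinity and the degree can drop. The roots of $A\circ f$ are then $\beta_i=\tfrac{d\alpha^{q^i}-b}{a-c\alpha^{q^i}}=\mu_{A^{-1}}(\alpha^{q^i})$, and because $A$ has entries in $\F_q$ the map $\mu_{A^{-1}}$ commutes with the Frobenius $x\mapsto x^q$, so $\beta_i=\beta_0^{q^i}$ with $\beta_0=\mu_{A^{-1}}(\alpha)$. Thus the roots form a single Frobenius orbit; moreover $\beta_0^{q^m}=\beta_0$ forces $\alpha^{q^m}=\alpha$ by injectivity of $\mu_{A^{-1}}$, so the orbit has size exactly $k$ and $\beta_0$ has degree $k$ over $\F_q$. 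Therefore $A\circ f$ is, up to the scalar making it monic, the minimal polynomial of $\beta_0$, which is irreducible of degree $k$, proving (i). Once (ii)--(iv) are available there is an alternative route: $A^{-1}\circ(A\circ f)=f$ by (iii) and (ii) shows $A\circ(-)$ is a degree-preserving multiplicative bijection and hence carries irreducibles to irreducibles, but one must then separately rule out degree drops on the linear factors, so I prefer the orbit argument.

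I expect the main obstacle to be the bookkeeping in (i): verifying that the leading coefficient never vanishes — which is precisely what forces the restriction $\deg f\ge2$ — and confirming that the Frobenius orbit of $\beta_0$ has full size $k$, so that $A\circ f$ is genuinely irreducible rather than a product of conjugate factors of smaller degree.
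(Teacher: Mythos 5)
The paper does not prove this lemma at all: it is quoted verbatim from Lemma~2.2 of Stichtenoth--Topuzo\u{g}lu \cite{ST12}, so there is no internal proof to compare against. Your argument is essentially the standard (and correct) one. For (i), clearing denominators to get $A\circ f(x)=\prod_{i}\bigl((a-c\alpha^{q^i})x+(b-d\alpha^{q^i})\bigr)$, observing that no leading coefficient vanishes because the roots of $f$ lie outside $\F_q$ (this is exactly where $\deg f\ge 2$ enters), and then checking that the roots $\beta_i=\mu_{A^{-1}}(\alpha^{q^i})$ form a single Frobenius orbit of full length $k$ by injectivity of the M\"obius map, is complete and correct; the only cosmetic point is that $A\circ f$ lands in $\mathcal I(q)$ only after the monic normalization $[A]\circ f$, a sloppiness already present in the paper's statement. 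Items (ii) and (iv) are as routine as you say. The one place you should be careful is (iii): with the definition printed in this paper's introduction, $A\circ f(x)=(cx+d)^k f\bigl(\frac{ax+b}{cx+d}\bigr)$, the substitution you describe yields $\mu_B\circ\mu_A=\mu_{BA}$ and hence $A\circ(B\circ f)=(BA)\circ f$, not $(AB)\circ f$; the identity as literally stated holds for the transposed convention $f\bigl(\frac{ax+c}{bx+d}\bigr)$ used in \cite{ST12}. Your remark about ``tracking the order of the product'' is precisely where this surfaces, so when you write out (iii) you will either have to adopt the transposed convention or restate the identity with $BA$. This discrepancy is inherited from the paper's change of notation and is harmless for everything that follows (it only exchanges a left action for a right one, and $\PGL(2,q)$ is anti-isomorphic to itself), but a finished proof must commit to one convention. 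You are also right that (iii) presupposes the degree-preservation part of (i), so your ordering of the argument is the correct one.
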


The previous lemma entails that the group $\PGL(2, q)$ acts on the set $\I(q)$ and the compositions $[A]\circ f$ are degree-preserving. This result is the starting point for extending this action to the group $\PTL(2, q^n)$.

\subsection{The action of $\PTL(2, q^n)$ on irreducible polynomials}

The {\em Projective Semilinear Group} $\PTL(2,q^n)$ is defined as the semi direct product of $\PGL(2,q^n)$ and $\Gal(\F_{q^n}/ \F_q)$ (this last group is isomorphic to the cyclic group of order $n$). The group $\PTL(2, q^n)$ is endowed with the following product: for $[A,\sigma_i], [B, \sigma_j] \in \PTL(2,q^n)$, we set 
$$[A,\sigma_i] \diamond [B,\sigma_j] = [A\sigma_i(B), \sigma_{i+j}],$$
where the index $i$ at $\sigma_i$ is taken modulo $n$. The identity element is $[I,\sigma_n]$, where $I$ is the $2 \times 2$ identity matrix. From the invariant structure of Frobenius automorphisms over the polynomial ring $\fqn[x]$ (like irreducibility and degree), Lemma~\ref{propriedades} readily implies the following result. 
\begin{lemma} \label{lemapropriedades} 
Let $A,B \in \mathrm{PGL}(2,q^n)$, $f, g\in \mathcal{I}_{k}$ with $k\ge 2$ and let $I$ be the $ 2 \times 2$ identity matrix. Then the following hold: 
\begin{enumerate}[(i)]
\item  $[ A, \sigma_i ] \ast  f \in \mathcal{I}_k$, i.e., $[ A, \sigma_i ] \ast  f$ is irreducible and $\deg ([ A, \sigma_i ] \ast  f )=\deg (f)$;
\item  $ [ I,\sigma_n ] \ast  f = f$;
\item  $([B, \sigma_j]\diamond [A, \sigma_i])\ast f=[ B, \sigma_j ]\ast( [A,\sigma_i ] \ast  f)$;
\item  $[ A, \sigma_i ] \ast (f \cdot g) = ( [ A, \sigma_i ] \ast  f)( [ A, \sigma_i ] \ast g) $;
\end{enumerate}
\end{lemma}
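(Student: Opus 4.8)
The plan is to deduce all four statements from two elementary facts together with Lemma~\ref{propriedades}. The first fact is that each $\sigma_i$, acting coefficient-wise on $\fqn[x]$, is a ring automorphism fixing $1$, hence it preserves degree, monicity, irreducibility and products; in particular $\sigma_i$ maps $\mathcal{I}_k$ into itself. The second fact is that Lemma~\ref{propriedades} already records the analogous behaviour of the M\"obius composition $[A]\circ(\cdot)$ over $\fqn$. Since $[A,\sigma_i]\ast f$ is by definition the composite $[A]\circ(\sigma_i(f))$, most of the argument is bookkeeping once the correct interplay between $\sigma_j$ and $\circ$ is isolated.

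The one genuinely new ingredient, and the step I expect to be the main obstacle, is a commutation identity: for every $g\in\fqn[x]$ and every $A\in\GL(2,q^n)$,
$$\sigma_j([A]\circ g)=[\sigma_j(A)]\circ\sigma_j(g),$$
where $\sigma_j(A)$ denotes the matrix obtained by applying $\sigma_j$ entrywise. To prove it I would first treat the un-normalized map: writing $A=\left(\begin{smallmatrix}a&b\\c&d\end{smallmatrix}\right)$ and $g=\sum_\ell g_\ell y^\ell$ of degree $k$, the polynomial $A\circ g(x)=\sum_\ell g_\ell (ax+b)^\ell(cx+d)^{k-\ell}$ is a fixed polynomial expression in the entries of $A$ and the coefficients of $g$; applying the ring homomorphism $\sigma_j$ term by term yields $\sigma_j(A\circ g)=\sigma_j(A)\circ\sigma_j(g)$. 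Passing to the monic representatives is harmless: if $[A]\circ g=\lambda\cdot(A\circ g)$ is the monic scalar multiple, then $\sigma_j([A]\circ g)=\sigma_j(\lambda)\cdot(\sigma_j(A)\circ\sigma_j(g))$ is monic and proportional to $\sigma_j(A)\circ\sigma_j(g)$, forcing it to equal $[\sigma_j(A)]\circ\sigma_j(g)$. The same computation shows that $\sigma_j$ is well defined on $\PGL(2,q^n)$, since scaling $A$ by $\lambda$ scales $\sigma_j(A)$ by $\sigma_j(\lambda)$.

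With these preliminaries, items (i), (ii) and (iv) are immediate. For (i), $\sigma_i(f)\in\mathcal{I}_k$ because $\sigma_i$ preserves degree, monicity and irreducibility, and then Lemma~\ref{propriedades}(i) gives $[A]\circ\sigma_i(f)\in\mathcal{I}_k$. For (ii), $\sigma_n=\sigma_1^n$ is the identity on $\fqn$, so $\sigma_n(f)=f$, and Lemma~\ref{propriedades}(ii) finishes. For (iv), multiplicativity of $\sigma_i$ gives $\sigma_i(fg)=\sigma_i(f)\sigma_i(g)$, after which Lemma~\ref{propriedades}(iv) splits the composition into the desired product.

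The associativity statement (iii) is where the commutation identity is used. Unwinding the definitions,
$$[B,\sigma_j]\ast([A,\sigma_i]\ast f)=[B]\circ\sigma_j([A]\circ\sigma_i(f))=[B]\circ\bigl([\sigma_j(A)]\circ\sigma_{i+j}(f)\bigr),$$
where I invoked the identity above together with $\sigma_j\circ\sigma_i=\sigma_{i+j}$ (indices read modulo $n$). Applying Lemma~\ref{propriedades}(iii) inside $\PGL(2,q^n)$ collapses the two M\"obius compositions into $[B\sigma_j(A)]\circ\sigma_{i+j}(f)$, which is exactly $[B\sigma_j(A),\sigma_{i+j}]\ast f=([B,\sigma_j]\diamond[A,\sigma_i])\ast f$ by the definition of $\diamond$. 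This completes the plan.
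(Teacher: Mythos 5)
Your proof is correct and follows the same route the paper intends: the paper gives no explicit proof, merely asserting that the lemma follows from Lemma~\ref{propriedades} together with the fact that the Frobenius maps preserve degree, monicity, irreducibility and products on $\fqn[x]$. Your write-up supplies the one detail the paper leaves implicit, namely the commutation identity $\sigma_j([A]\circ g)=[\sigma_j(A)]\circ\sigma_j(g)$ needed for item (iii), and your verification of it (and of the compatibility with passing to monic representatives) is sound.
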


In particular, the compositions $[A, \sigma_i]\ast f$ define an action of the group $\PTL(2, q^n)$ on the sets $\I_k$ for $k\ge 2$. From now and on, we are mostly interested in the study of the fixed points of this group action. As follows, we show that it suffices to consider only the elements of $\PTL(2, q^n)$ of a special form.

\subsection{A first reduction}
For a generic element $\mathfrak g=[A, \sigma_i]\in \PTL(2, q^n)$, let us compute its order $\ord(\mathfrak g)$. Set $t=\gcd(i, n)$, hence $\ord(\mathfrak g)$ is divisible by $n/t$. 
If we set $C=A\sigma_i(A)\ldots \sigma_{i(n/t-1)}(A)$, then it is direct to verify that $\ord(\mathfrak g)=n/t\cdot \ord([C])$. In particular, for any $j\ge 1$ relatively prime with $\ord(\mathfrak g)$, we have that $f\in \I_k, k\ge 2$ is $[A, \sigma_i]$-invariant if and only if it is $[A, \sigma_i]^j$-invariant. Since $i/t$ and $n/t$ are relatively prime, there exists a positive integer $a$ such that $a\cdot i/t\equiv 1\pmod {n/t}$ or, equivalently, $ai\equiv t\pmod n$. In addition, we have that $a$ and $n/t$ are relatively prime and so there exist infinitely many prime numbers of the form $n/t\cdot R+a$ by the Dirichlet's Theorem. Pick a prime $P$ of such form in a way that $P>\ord(\mathfrak g)$. Therefore, a polynomial $f\in \I_k$ with $k\ge 2$ is $[A, \sigma_i]$-invariant if and only if it is $[A, \sigma_i]^P$-invariant. However, since $P\equiv a\pmod {n/t}$, we have that $[A, \sigma_i]^P=[B, \sigma_t]$ for a suitable $B\in \GL(2,\fqn)$. In particular, we proved the following result.

\begin{lemma}
For any $\mathfrak g\in \PTL(2, \fqn)$, there exists a divisor $t$ of $n$ and an element $\mathfrak h\in \PTL(2, \fqn)$ of the form $[C, \sigma_t]$ such that the $\mathfrak g$-invariants are exactly the $\mathfrak h$-invariants.
\end{lemma}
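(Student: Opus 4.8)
The plan is to exploit the fact that replacing $\mathfrak g$ by a suitable power does not alter its fixed-point set, while it can be arranged to collapse most of the Galois component. Concretely, if $j$ is any integer with $\gcd(j, \ord(\mathfrak g)) = 1$, then $\langle \mathfrak g\rangle = \langle \mathfrak g^j\rangle$ as cyclic subgroups of $\PTL(2, q^n)$, so $\mathfrak g$ and $\mathfrak g^j$ have exactly the same fixed points on each $\I_k$: every $\mathfrak g$-invariant is trivially $\mathfrak g^j$-invariant, and conversely writing $\mathfrak g = (\mathfrak g^j)^m$ for an appropriate $m$ gives the reverse inclusion. Thus it suffices to produce a single exponent $j$, coprime to $\ord(\mathfrak g)$, for which $\mathfrak g^j$ has the shape $[C, \sigma_t]$.

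Next I would write $\mathfrak g = [A, \sigma_i]$ and set $t = \gcd(i, n)$, which is the divisor of $n$ that will appear in the conclusion. Iterating the product rule of $\PTL(2, q^n)$ gives, for every $m \ge 1$,
$$\mathfrak g^m = [A\,\sigma_i(A)\cdots \sigma_{i(m-1)}(A),\ \sigma_{im}],$$
so the Galois component of $\mathfrak g^m$ is $\sigma_{im}$ and the matrix component is the twisted product $A\,\sigma_i(A)\cdots \sigma_{i(m-1)}(A)$. In particular, the Galois component of $\mathfrak g^j$ equals $\sigma_t$ precisely when $ij \equiv t \pmod n$. Writing $i = t\,i'$ and $n = t\,n'$ with $\gcd(i', n') = 1$, this congruence is equivalent to $i'j \equiv 1 \pmod{n'}$; since $i'$ is a unit modulo $n' = n/t$, it reduces to the single residue condition $j \equiv a \pmod{n/t}$, where $a \equiv (i')^{-1} \pmod{n/t}$ satisfies $\gcd(a, n/t) = 1$. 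Along the way one also records $\mathfrak g^{n/t} = [C, \sigma_n]$ with $C = A\,\sigma_i(A)\cdots \sigma_{i(n/t-1)}(A)$, whence $\ord(\mathfrak g) = (n/t)\cdot \ord([C])$.

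Finally, the two demands on $j$ are that it satisfy $j \equiv a \pmod{n/t}$ and that $\gcd(j, \ord(\mathfrak g)) = 1$. Since $\gcd(a, n/t) = 1$, Dirichlet's theorem on primes in arithmetic progressions furnishes infinitely many primes in the class $a \bmod (n/t)$; choosing a prime $P$ in this class with $P > \ord(\mathfrak g)$ forces $P \nmid \ord(\mathfrak g)$ and hence $\gcd(P, \ord(\mathfrak g)) = 1$. Then $\mathfrak g^P = [B, \sigma_{iP}] = [B, \sigma_t]$ for a suitable $B$, and by the first paragraph the $\mathfrak g$-invariants coincide with the $\mathfrak g^P$-invariants. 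Taking $\mathfrak h = \mathfrak g^P = [B, \sigma_t]$ completes the argument. I expect the only genuinely delicate point to be this last step: one must secure a single exponent that both normalizes the Galois part to exactly $\sigma_t$ and remains coprime to $\ord(\mathfrak g)$, so that the cyclic-subgroup argument of the first paragraph applies. The cleanest way to meet both conditions simultaneously is the prime-in-progression argument; a purely elementary lift of $a \bmod (n/t)$ to an integer coprime to $\ord(\mathfrak g)$ is possible but more fiddly, so Dirichlet is the natural tool.
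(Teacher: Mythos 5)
Your argument is correct and follows essentially the same route as the paper: set $t=\gcd(i,n)$, solve $ai\equiv t\pmod{n}$ via the invertibility of $i/t$ modulo $n/t$, and use Dirichlet's theorem to pick a prime $P\equiv a\pmod{n/t}$ with $P>\ord(\mathfrak g)$, so that $\mathfrak g^P=[B,\sigma_t]$ has the same invariants as $\mathfrak g$. The only cosmetic difference is that you spell out the cyclic-subgroup justification for why passing to a coprime power preserves the fixed-point set, which the paper leaves implicit.
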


If we set $Q=q^t$ and $\tau_{1}=\sigma_{t}$, then $q^n=Q^{n/t}$ and $[A, \sigma_t]=[A, \tau_1]$. In this case, $\tau_1$ is the generator of $\Gal(\F_{Q^{n_0}}/\F_Q)$, where $n_0=n/t$. In other words, there is no loss of generality in considering only elements of the form $[A, \sigma_1]$.

\subsection{On the characterization of invariant polynomials}
Our aim is to provide a characterization of the $[A, \sigma_1]$-invariants for generic $[A]\in \PGL(2, q^n)$. In order to simplify the powers of the elements in $\PTL(2, q^n)$, we introduce the following definition.

\begin{define}
For $A\in \GL(2, q^n)$ and a positive integer $i$, set $$A_i^*=A\sigma_1(A)\ldots \sigma_{i-1}(A).$$ For $i=n$, we simply write $A^*=A_n^*$.
\end{define}
Following the procedure of~\cite{ST12}, we introduce two useful definitions.
\begin{define}
For $A\in \GL(2,q^n)$ with  $A=\left(\begin{matrix}
a&b\\
c&d
\end{matrix}\right)$ and a nonnegative integer $m$, we set
$$F_{A, m}(x)=bx^{q^m+1}-ax^{q^m}+dx-c.$$
For nonnegative integers $i, m$ with $m\ge i$, we set
$$F_{A, m, i}(x)=\sigma_{-i}(F_{A_i^*, m-i}(x)).$$
\end{define}

\begin{define}
For an integer $i$,  $A \in \GL(2, q^n)$ with $A=\left(\begin{matrix}
a&b\\
c&d
\end{matrix}\right)$ and $\alpha\in \overline{\F}_q\setminus \fqn$, we set
$$[A]\circ \alpha=\frac{d\alpha-c}{-b\alpha+a}\quad\text{and}\quad [A, \sigma_i]\ast \alpha= [A]\circ \sigma_i(\alpha)=[A]\circ \alpha^{q^i}.$$
\end{define}

It is direct to verify that the group $\PGL(2, q^n)\rtimes \Z$ acts on the set $\alpha\in \overline{\F}_q\setminus \fqn$ via the compositions $[A, \sigma_i]\ast \alpha$. The following proposition shows that the invariants are exactly the irreducible divisors (over $\fqn$) of a special class of polynomials.

\begin{prop}\label{prop:equiv}
Let $k\ge 2$, $f\in \I_k$ and let $\alpha\in \overline{\F}_q\setminus \fqn$ be any root of $f$. For $[A, \sigma_1]\in \PTL(2, q^n)$, the following are equivalent:

\begin{enumerate}[(i)]
    \item $f$ is $[A, \sigma_1]$-invariant;
    \item $[A, \sigma_1]\ast \alpha=\alpha^{q^{nr}}$ for some nonnegative integer $r$;
    \item $f$ divides $F_{A, nr,1}$ for some nonnegative integer $r$.
\end{enumerate}
\end{prop}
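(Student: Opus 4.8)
The plan is to establish the two equivalences $(i)\Leftrightarrow(ii)$ and $(ii)\Leftrightarrow(iii)$ separately, using throughout that, since $f\in\I_k$ is monic irreducible of degree $k\ge 2$ with root $\alpha$, it is the minimal polynomial of $\alpha$ over $\fqn$. In particular the roots of $f$ are exactly the $\fqn$-conjugates $\alpha^{q^{nj}}$ with $0\le j\le k-1$, and for every $G\in\fqn[x]$ one has $f\mid G$ if and only if $G(\alpha)=0$. The guiding idea is to translate the global statement ``$f$ is invariant'' into a statement about the single root $\alpha$.

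For $(i)\Leftrightarrow(ii)$, I would first recall from Lemma~\ref{lemapropriedades}(i) that $[A,\sigma_1]\ast f$ is again monic irreducible of degree $k$. Two monic irreducible polynomials of equal degree coincide if and only if they share a root, so $f$ is $[A,\sigma_1]$-invariant exactly when $f$ and $[A,\sigma_1]\ast f$ have a common root. The bridge between the two actions is that $[A,\sigma_1]\ast\alpha$ is a root of $[A,\sigma_1]\ast f$: unwinding $[A,\sigma_1]\ast f=[A]\circ\sigma_1(f)$, the roots of $\sigma_1(f)$ are the elements $\beta^{q}$ for $\beta$ a root of $f$, and the M\"obius composition carries these to $[A]\circ\beta^{q}=[A,\sigma_1]\ast\beta$. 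Hence if $f=[A,\sigma_1]\ast f$ then $[A,\sigma_1]\ast\alpha$ must be a root of $f$, and conversely, if $[A,\sigma_1]\ast\alpha$ is a root of $f$ then $f$ and $[A,\sigma_1]\ast f$ share that root and so are equal. Finally, $[A,\sigma_1]\ast\alpha$ is a root of $f$ precisely when it lies in the conjugate set $\{\alpha^{q^{nj}}\}$, i.e. when $[A,\sigma_1]\ast\alpha=\alpha^{q^{nr}}$ for some nonnegative integer $r$; this is exactly $(ii)$.

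For $(ii)\Leftrightarrow(iii)$, the point is to compute when $\alpha$ is a root of $F_{A,nr,1}$. Since $A_1^*=A$, the definition gives $F_{A,nr,1}=\sigma_{-1}(F_{A,nr-1})$, and $f\mid F_{A,nr,1}$ is equivalent to $F_{A,nr,1}(\alpha)=0$. Applying the automorphism $\sigma_1$ and using the identity $\sigma_1\bigl(\sigma_{-1}(G)(\alpha)\bigr)=G(\alpha^{q})$, valid for every $G\in\fqn[x]$, this becomes $F_{A,nr-1}(\alpha^{q})=0$. A direct substitution in $F_{A,m}(x)=bx^{q^m+1}-ax^{q^m}+dx-c$ shows that $F_{A,m}(\beta)=0$ if and only if $\beta^{q^m}=\frac{d\beta-c}{-b\beta+a}=[A]\circ\beta$. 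Taking $\beta=\alpha^{q}$ and $m=nr-1$ yields $\alpha^{q^{nr}}=[A]\circ\alpha^{q}=[A,\sigma_1]\ast\alpha$, which is precisely $(ii)$ for that value of $r$; letting $r$ vary gives the equivalence.

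I expect the conceptual crux to be the root-action correspondence invoked in $(i)\Leftrightarrow(ii)$, namely the identification of the roots of $[A,\sigma_1]\ast f$ with the elements $[A,\sigma_1]\ast\beta$: this is exactly where the M\"obius convention for the action on polynomials must be matched against the convention for the action on roots, and once this compatibility is pinned down the rest of $(i)\Leftrightarrow(ii)$ is formal. The main computational care sits in $(ii)\Leftrightarrow(iii)$: one must unwind the twist $\sigma_{-1}$ in $F_{A,nr,1}$ and track the Frobenius exponents accurately. Finally, since $F_{A,nr,1}$ is defined only for $nr\ge 1$, the relevant $r$ in $(iii)$ are positive; this causes no loss, because a nonnegative $r$ witnessing $(ii)$ may always be replaced by $r+k\ge 1$, as $\alpha^{q^{nr}}=\alpha^{q^{n(r+k)}}$ thanks to $\alpha\in\F_{q^{nk}}$.
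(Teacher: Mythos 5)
Your proposal is correct and follows essentially the same route as the paper: both reduce invariance of $f$ to the single-root condition $[A,\sigma_1]\ast\alpha=\alpha^{q^{nr}}$ via the compatibility of the action on polynomials with the action on roots, and then observe that $f\mid F_{A,nr,1}$ encodes exactly that condition. You merely organize it as two biconditionals instead of a cycle of implications, and you supply details the paper leaves implicit (the explicit unwinding of $\sigma_{-1}$ in $F_{A,nr,1}$ and the harmless replacement of $r=0$ by $r+k$).
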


\begin{proof}
We split the proof into cases.
\begin{enumerate}[(i)] 
\item To prove that (i) implies (ii), first notice that
 $$f(\alpha) = 0 \Leftrightarrow ([A,\sigma_1] \ast f)([A, \sigma_1] \ast \alpha) = 0.$$
By assumption, $[A,\sigma_1] \ast f = f$ and so $f([A,\sigma_1] \ast \alpha) =0$. Since $f$ is irreducible, the latter implies that 
 $[A,\sigma_1] \ast \alpha $ is conjugate to $\alpha$ over $\fqn$, and so $[A,\sigma_1] \ast \alpha = \alpha^{q^{nr}}$ for some nonnegative integer $r$.
\item The fact that (ii) implies (iii) follows directly from the definition of the polynomial $F_{A, nr, 1}(x)$.

\item To prove that (iii) implies (i), we observe that the equality $F_{A,nr,1}(\alpha)=0$ is equivalent to $[A, \sigma_1] \ast \alpha = \alpha^{q^{nr}}$. Since $\alpha^{q^{nr}}$ is conjugate to $\alpha$ over $\fqn$, such an element is also a root of $f$. From the proof of item (i), we see that the latter implies the equality $[A, \sigma_1] \ast f = f$.
\end{enumerate}
\end{proof}

\section{On the divisors of the polynomials $F_{A, nr, 1}(x)$}
Proposition~\ref{prop:equiv} entails that, for a fixed $[A]\in \PGL(2, q^n)$, the $[A, \sigma_1]$-invariants are exactly the irreducible factors of the polynomials $F_{A, nr, 1}(x)$ with $r\ge 0$. In particular, the study of the degree distribution of the irreducible factors of $F_{A, nr, 1}$ over $\fqn$ is crucial for the proof of Theorem~\ref{thm:main}. First, we prove that we do not need to search for all $r$ in order to verify if a polynomial is $[A, \sigma_1]$-invariant.

\begin{lemma}\label{lem:search}
Let $f$ be a $m$-degree monic irreducible polynomial over $\F_{q^n}$ and $[A]\in \PGL(2, q^n)$, where $m\ge 1$. If $f$ divides a polynomial of the form $F_{A, nr, 1}$, then the minimal positive $r$ with this property satisfies $r\le m$ and any other positive integer $r'$ with such a property satisfies $r\equiv r'\pmod m$.
\end{lemma}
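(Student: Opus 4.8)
The plan is to translate the divisibility condition into a statement about the orbit of a single root under iterated Frobenius, and then exploit that the $\fqn$-conjugates of that root form a cycle of length exactly $m$. First I fix a root $\alpha\in\overline{\F}_q\setminus\fqn$ of $f$. Since $f$ is irreducible of degree $m$ over $\fqn$, its conjugates over $\fqn$ are precisely $\alpha,\alpha^{q^n},\ldots,\alpha^{q^{n(m-1)}}$, these $m$ elements are pairwise distinct, and $\alpha^{q^{nm}}=\alpha$. Consequently the map $r\mapsto\alpha^{q^{nr}}$ is periodic of period exactly $m$, with $\alpha^{q^{nr}}=\alpha^{q^{nr'}}$ if and only if $r\equiv r'\pmod m$.

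Next I invoke the equivalence (ii)$\Leftrightarrow$(iii) of Proposition~\ref{prop:equiv}: for a positive integer $r$ one has $f\mid F_{A,nr,1}$ if and only if $[A,\sigma_1]\ast\alpha=\alpha^{q^{nr}}$. The crucial observation is that the left-hand side $\beta:=[A,\sigma_1]\ast\alpha$ does not depend on $r$, whereas the right-hand side ranges over the $m$ distinct conjugates of $\alpha$ as $r$ varies. Hence the set $S=\{\,r\ge 1: f\mid F_{A,nr,1}\,\}$ equals $\{\,r\ge 1:\alpha^{q^{nr}}=\beta\,\}$, which, by the periodicity just established, is either empty or the intersection of a single residue class modulo $m$ with the positive integers.

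Now I use the hypothesis that $f$ divides some $F_{A,nr,1}$, so $S$ is nonempty; in particular $\beta$ must be one of the conjugates of $\alpha$, say $\beta=\alpha^{q^{nr_0}}$ with a unique $r_0\in\{0,1,\ldots,m-1\}$. Then $S=\{\,r\ge 1:r\equiv r_0\pmod m\,\}$, which immediately gives that any two elements of $S$ are congruent modulo $m$, proving the second assertion. For the bound on the minimal element, note that the smallest positive integer in the residue class $r_0$ is $r_0$ itself when $1\le r_0\le m-1$, and is $m$ when $r_0=0$; in either case it does not exceed $m$.

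I do not anticipate a genuine obstacle once the correspondence of Proposition~\ref{prop:equiv} is in hand; the only point requiring care is the verification that the conjugates $\alpha^{q^{nr}}$ genuinely have period $m$ rather than a proper divisor of $m$, which is exactly the irreducibility of $f$ over $\fqn$ together with $\deg f=m$. This is what collapses the valid exponents into a single residue class and yields the sharp bound $r\le m$, the extreme value $r=m$ being attained precisely in the case $r_0=0$, i.e.\ when $f$ is itself $[A,\sigma_1]$-invariant.
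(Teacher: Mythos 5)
Your proposal is correct and follows essentially the same route as the paper: fix a root $\alpha$, observe that $f\mid F_{A,nr,1}$ is equivalent to $[A,\sigma_1]\ast\alpha=\alpha^{q^{nr}}$, and use that $r\mapsto\alpha^{q^{nr}}$ has exact period $m$ because $f$ is irreducible of degree $m$ over $\fqn$. Your write-up is in fact slightly more explicit than the paper's (isolating the fixed element $\beta=[A,\sigma_1]\ast\alpha$ and the residue class $r_0$), but the underlying argument is identical.
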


\begin{proof}
Let $\alpha$ be any root of $f$. Since $f$ is irreducible, $f$ divides $F_{A, nr, 1}$ if and only if $F_{A, nr, 1}(\alpha)=0$. However, from definiton, we see that $F_{A, nr, 1}(\alpha)=F_{A, nr', 1}(\alpha)$ if and only if $\alpha^{q^{n(r-r')}}=\alpha$. Since $f$ is irreducible and has degree $m$, the latter is equivalent to $r\equiv r'\pmod m$. In particular, if $f$ divides $F_{A, nr_0, 1}$ for some $r_0\ge 0$, we can find a positive integer $r\le m$ with $r\equiv r_0\pmod m$. 
\end{proof}

Throughout this section, we fix $[A, \sigma_1]\in \PTL(2, q^n)$ and $D=\ord([A^*])$. We have the following lemma.

\begin{lemma}
For non-negative integers $i, m$ with $m\ge i$, the polynomial $F_{A, m , i}$ is separable.
\end{lemma}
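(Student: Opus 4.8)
The plan is to reduce the statement to the polynomials $F_{B,m'}$ and then attack separability through the formal derivative. First I would observe that, by definition, $F_{A,m,i}=\sigma_{-i}(F_{A_i^*,\,m-i})$, where $\sigma_{-i}$ is the automorphism of $\fqn$ equal to $\sigma_{n-i}$ modulo $n$, extended coefficient-wise to $\fqn[x]$. Extending $\sigma_{-i}$ to an automorphism of $\overline{\F}_q$, the roots of $F_{A,m,i}$ in $\overline{\F}_q$ are exactly the $\sigma_{-i}$-images of the roots of $F_{A_i^*,\,m-i}$, with identical multiplicities; hence $F_{A,m,i}$ is separable if and only if $F_{A_i^*,\,m-i}$ is. Writing $B=A_i^*$, which lies in $\GL(2,q^n)$ because each $\sigma_j(A)$ is invertible and the determinant is multiplicative, and setting $m'=m-i\ge 0$, it therefore suffices to prove that $F_{B,m'}$ is separable for every $B\in\GL(2,q^n)$ and every $m'\ge 0$.

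For the main case $m'\ge 1$, write $B=\left(\begin{matrix} a' & b' \\ c' & d'\end{matrix}\right)$, so that $F_{B,m'}(x)=b'x^{q^{m'}+1}-a'x^{q^{m'}}+d'x-c'$. The key point is the characteristic-$p$ collapse of the derivative: since $p\mid q^{m'}$, the monomial $x^{q^{m'}}$ has zero derivative and $x^{q^{m'}+1}$ differentiates to $b'x^{q^{m'}}$, so that $F_{B,m'}'(x)=b'x^{q^{m'}}+d'$. I would then rule out common roots. If $b'=0$, then $\det B=a'd'\neq 0$ forces $d'\neq 0$, so $F_{B,m'}'=d'$ is a nonzero constant and $F_{B,m'}$ is separable. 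If $b'\neq 0$, any common root $\alpha$ of $F_{B,m'}$ and $F_{B,m'}'$ satisfies $\alpha^{q^{m'}}=-d'/b'$; substituting this into $F_{B,m'}(\alpha)=0$ collapses the expression to $(a'd'-b'c')/b'=\det(B)/b'$, which cannot vanish as $B$ is invertible. Hence $\gcd(F_{B,m'},F_{B,m'}')=1$ and $F_{B,m'}$ is separable, which settles every case with $m>i$. Note this argument is uniform in the characteristic, including $p=2$, since $q^{m'}+1$ is odd.

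The remaining case $m'=0$ (that is, $m=i$) is where I expect the genuine difficulty. Here $F_{B,0}(x)=b'x^{2}+(d'-a')x-c'$ is the quadratic whose roots are the fixed points of the M\"obius map $[B]$, and its discriminant equals $(a'+d')^{2}-4\det B$. This vanishes exactly when $[B]$ has a repeated eigenvalue, so separability can genuinely fail, precisely for parabolic (or scalar) $[B]=[A_i^*]$. The way I would dispose of this boundary case is to argue that it is harmless for the intended application: one takes $i=1$ and $m=nr$, whence $m'=nr-1\ge 1$ as soon as $n\ge 2$, so the degenerate quadratic never occurs in the relevant polynomials $F_{A,nr,1}$; moreover, whenever a double root does appear it is a fixed point lying in $\fqn$, and hence contributes no irreducible invariant of degree $\ge 2$.

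In summary, the heart of the proof is the derivative identity $F_{B,m'}'(x)=b'x^{q^{m'}}+d'$ combined with the invertibility of $B$, which together prove separability cleanly for all $m>i$; the only delicate point is the degenerate quadratic at $m=i$, which I would handle by the remark above on rational fixed points.
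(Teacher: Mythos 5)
Your argument for the main case $m>i$ is correct and is essentially the paper's proof in different packaging: the paper works directly with $F_{A,m,i}$ and phrases the contradiction as a linear dependence between the rows of a Frobenius twist of $A_i^*$, while you first strip off the automorphism $\sigma_{-i}$ (which indeed preserves separability) and run the same derivative-plus-substitution computation on $F_{B,m'}$ with $B=A_i^*$, reading the contradiction off as $\det(B)/b'\neq 0$; these are the same resultant calculation, and your separate treatment of the subcase $b'=0$ is subsumed in the paper's row-dependence formulation. The genuinely valuable part of your write-up is the boundary case $m=i$: you are right that the lemma as stated fails there, since $F_{B,0}(x)=b'x^{2}+(d'-a')x-c'$ has discriminant $(a'+d')^{2}-4\det(B)$, which vanishes for parabolic $[B]$ (e.g.\ $B=\left(\begin{smallmatrix}1&1\\0&1\end{smallmatrix}\right)$ gives $F_{B,0}=x^{2}$). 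The paper's proof tacitly assumes $m>i$: its formula for the formal derivative, $F'_{A,m,i}(\beta)=b_i^{q^{n-i}}\beta^{q^{m}}+d_i^{q^{n-i}}$, relies on the characteristic $p$ killing the exponent $q^{m-i}$, which only happens when $m>i$. Your remark that the degenerate quadratic never occurs in the applications (there one always has $m-i=s\ge 1$, resp.\ $m-i=nr-1\ge 1$) is the right repair, though the cleanest fix is simply to restate the lemma with the hypothesis $m>i$ in place of $m\ge i$.
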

\begin{proof}
Let $A_i^* = \begin{pmatrix}
a_i & b_i \\
c_i & d_i 
\end{pmatrix}$ and suppose that $F_{A, m , i}$ and its formal derivative $F'_{A, m , i}$ have a common zero $\beta \in \overline{\F}_{q^n}$. Therefore,
$$F'_{A, m , i}(\beta) = b_i^{q^{n-i}} \beta^{q^m} + d_i^{q^{n-i}} = (b_i^{q^{n-(m+i)}} \beta + d_i^{q^{n-(m+i)}})^{q^{m}} = 0, $$
and 
\begin{align*}F_{A, m , i}(\beta) & = \beta(b_i^{q^{n-i}}\beta^{q^m} - d_i^{q^{n-i}}) - (a_i^{q^{n-i}} \beta^{q^m} + c_i^{q^{n-i}}) \\
& = - (a_i^{q^{n-(m+i)}} \beta + c_i^{q^{n-(m+i)}})^{q^m} = 0  
\end{align*}
That is $a_i^{q^{n-(m+i)}} \beta + c_i^{q^{n-(m+i)}} = 0$ which gives a non-trivial linear combination $\beta(a_i^{q^{n-i}}, b_i^{q^{n-i}}) + (c_i^{q^{n-i}}, d_i^{q^{n-i}}) = (0,0)$ of the rows of the matrix $\sigma_{n-i}(A_i^*)$, but this is a contradiction, since $A_i^*$ is nonsingular (recall that $A$ is nonsigular).
\end{proof}

As follows, we provide information on the degrees of the irreducible factors of the polynomials $F_{A, nr, 1}$.

\begin{prop}\label{prop:degrees}
Fix $r\ge 1$ an integer. If $f\in \I_k$ with $k>2$ divides $F_{A, nr, 1}$, then $f$ has degree of the form $D\cdot s$, where $D=\ord([A^*])$ and $s$ is a divisor of $nr-1$ with $\gcd\left(\frac{nr-1}{s}, D\right)=1$. In particular, $\gcd(s, n)=1$.
\end{prop}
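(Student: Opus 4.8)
The plan is to fix a root $\alpha\in\overline{\F}_q\setminus\fqn$ of $f$ and translate the divisibility hypothesis into a statement about how the projective map $[A^*]$ moves $\alpha$; the degree $k$ will then be forced by the order of $[A^*]$ at the point $\alpha$. Since $f\in\I_k$, the root $\alpha$ has degree $k$ over $\fqn$, so the $\fqn$-Frobenius $\sigma_n$ satisfies $\sigma_n^{i}(\alpha)=\alpha^{q^{ni}}=\alpha$ if and only if $k\mid i$. By Proposition~\ref{prop:equiv}, the assumption $f\mid F_{A,nr,1}$ is equivalent to $[A,\sigma_1]\ast\alpha=\alpha^{q^{nr}}$, and this is the only input from the divisibility hypothesis that I would use.

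First I would iterate this identity. By induction on $t$ one gets $[A,\sigma_1]^t\ast\alpha=\alpha^{q^{tnr}}$ for all $t\ge1$: in the inductive step one writes $[A,\sigma_1]\ast\beta=[A]\circ\beta^q$ and uses that $\sigma_{tnr}$ fixes the entries of $A$ (because $n\mid tnr$), so that $\sigma_{tnr}$ commutes with the M\"obius action of $[A]$. Specializing to $t=n$ and using $[A,\sigma_1]^n=[A^*,\sigma_n]$ (which follows from the product rule in $\PTL(2,q^n)$ together with the definition of $A^*$), I obtain $[A^*]\circ\alpha^{q^n}=\alpha^{q^{n^2r}}$; applying $\sigma_{-n}$ and recalling that $A^*$ has entries in $\fqn$ yields the clean relation $[A^*]\circ\alpha=\alpha^{q^{n(nr-1)}}=\sigma_n^{nr-1}(\alpha)$. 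Iterating this last identity (again because $\sigma_n$ fixes $A^*$) gives $[A^*]^t\circ\alpha=\sigma_n^{(nr-1)t}(\alpha)$ for every $t$, so that $[A^*]^t\circ\alpha=\alpha$ precisely when $k\mid(nr-1)t$. Hence the order of $[A^*]$ at the point $\alpha$ equals $D':=k/\gcd(k,nr-1)$, and clearly $D'\mid D$ since $[A^*]^D=[I]$.

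The crux is to upgrade $D'\mid D$ to $D'=D$, i.e.\ to show that $[A^*]$ acts on $\alpha$ with its full order. If $[A^*]^{D'}\ne[I]$, then $[A^*]^{D'}$ is a nonidentity element of $\PGL(2,q^n)$ fixing $\alpha$, and writing out the fixed-point condition $[A^*]^{D'}\circ\alpha=\alpha$ exhibits $\alpha$ as a root of a nonzero polynomial of degree at most two with coefficients in $\fqn$. This forces $\alpha\in\F_{q^{2n}}$, hence $k\le2$, contradicting $k>2$. Therefore $[A^*]^{D'}=[I]$, so $D\mid D'$ and thus $D'=D$. This fixed-point count is the step I expect to be the main obstacle, since it is exactly where the hypothesis $k>2$ enters and where one must rule out $\alpha$ being an exceptional fixed point of a proper power of $[A^*]$.

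It then remains to read off the arithmetic. From $D=k/\gcd(k,nr-1)$ we get $k=D\cdot s$ with $s:=\gcd(k,nr-1)$, so in particular $s\mid nr-1$. Writing $nr-1=su$ with $u=(nr-1)/s$ and using $\gcd(Ds,su)=s\gcd(D,u)$, the equality $s=\gcd(k,nr-1)=\gcd(Ds,su)$ forces $\gcd(D,u)=1$, that is $\gcd\bigl(\tfrac{nr-1}{s},D\bigr)=1$. Finally $\gcd(s,n)=1$ follows from $s\mid nr-1$, since a common prime divisor of $s$ and $n$ would divide both $nr$ and $nr-1$, hence $1$.
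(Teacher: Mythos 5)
Your proof is correct, and it reaches the conclusion by a somewhat different (and more self-contained) organization than the paper. The paper first invokes Theorem~3.3 of \cite{ST12} to obtain the divisibility $D\mid k$, writes $k=Ds$, then separately derives $s\mid nr-1$ from the iteration $[A,\sigma_1]^{Dn}\ast\alpha=\alpha^{q^{n^2rD}}$, and finally proves $\gcd\bigl(\tfrac{nr-1}{s},D\bigr)=1$ by contradiction. You instead compute in one stroke the exact order $D'=k/\gcd(k,nr-1)$ of $[A^*]$ at the point $\alpha$ via the clean relation $[A^*]\circ\alpha=\sigma_n^{nr-1}(\alpha)$, and then show $D'=D$; all three arithmetic conclusions then fall out of the single identity $k=D\gcd(k,nr-1)$. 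The decisive ingredient is identical in both arguments: a nonidentity element of $\PGL(2,q^n)$ fixing $\alpha$ forces $\alpha$ to satisfy a nontrivial equation of degree at most two over $\fqn$, contradicting $k>2$ --- this is exactly the paper's contradiction step with $[C]=[(A^*)^l]$, and it is where you correctly identified the hypothesis $k>2$ entering. What your route buys is independence from the external citation (you re-prove the needed case of \cite{ST12} rather than quoting it) and a uniform derivation of $D\mid k$, $s\mid nr-1$ and the coprimality condition from one stabilizer computation; the cost is negligible. The only blemish is a harmless indexing slip in your induction (the Frobenius power that must commute past $[A]$ in the step from $t-1$ to $t$ is $\sigma_{(t-1)nr}$, not $\sigma_{tnr}$), which does not affect the argument.
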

\begin{proof}
From Proposition~\ref{prop:equiv}, $f$ is $[A, \sigma_1]$-invariant. In particular,
$$[A^*]\ast f=[A^*, \sigma_n]\ast f=[A, \sigma_1]^n\ast f=f,$$
hence $f$ is $[A^*]$-invariant. From Theorem~3.3 of~\cite{ST12}, $f$ has degree divisible by $D=\ord([A^*])$. Write $\deg(f)=Ds$ and let $\alpha\in \overline{\F}_q\setminus \fqn$ be any root of $f$. From Proposition~\ref{prop:equiv}, we have that
$$[A, \sigma_1]\ast \alpha=\alpha^{q^{nr}},$$
hence 
$$\alpha^{q^{Dn}}=[I, \sigma_{Dn}]\ast \alpha=[A, \sigma_1]^{Dn}\ast \alpha=\alpha^{q^{n^2rD}}.$$
Therefore, $\alpha=\alpha^{q^{nD(nr-1)}}$ and so $s$ divides $nr-1$ (recall that $f$ has degree $Ds$ and is irreducible over $\fqn$). We shall prove that $(nr-1)/s$ and $D$ are relatively prime. On the contrary, set $e=\gcd((nr-1)/s, D)>1$ and $l=D/e<D$. Therefore, 
$$[C, \sigma_{nl}]\ast \alpha=[A, \sigma_1]^{nl}\ast \alpha=\alpha^{q^{n^2rl}},$$
where $C=(A^*)^l$. The last equality is equivalent to
$$[C]\circ \alpha=\alpha^{q^{nl(nr-1)}}=\alpha^{q^{nDs\cdot\frac{nr-1}{es}}}=\alpha.$$
However, since $l<D$, we have that $[C]$ is not the identity matrix and so the last equality entails that $\alpha$ satisfies a nontrivial polynomial equation of degree at most two with coefficients in $\fqn$. This is impossible since $\alpha$ is a root of $f$ and $f$ is an irreducible polynomial of degree $k>2$.
\end{proof}

Next, we find bounds on the number of irreducible factors of the polynomials $F_{A, nr, 1}$, according to their degree.

\begin{theorem}\label{thm:aux}
Let $r$ be a positive integer and let $s$ be a divisor of $nr-1$ such that $nr-1=sm$ and $\gcd(m, D)=1$. If $j$ is the least positive integer such that $jm\equiv 1\pmod{Dn}$, then the following hold.

\begin{enumerate}
    \item For each divisor $l>2$ of $Ds$, the $l$-degree irreducible divisors of the polynomial $F_{A, nr, 1}$ are the $l$-degree irreducible divisors of the polynomial $F_{A_j^*, j+s, j}$. In particular, the number of $Ds$-degree irreducible divisors of the polynomial $F_{A, nr, 1}$ is at most $\frac{q^{s}+1}{Ds}$. 
    
    \item The irreducible factors of $F_{A_j^*, j+s, j}$ are of degree $Dt$, where $t$ divides $s$ and of degree at most two.
\end{enumerate}
In particular, the number $N$ of $Ds$-degree irreducible divisors of the polynomial $F_{A, nr, 1}$ satisfies
$$N=\frac{q^s}{Ds}+H(s),$$
where $|H(s)|\le 3q^{e(s)}$ and $e(s)=\max\{2n, s/2\}$.
\end{theorem}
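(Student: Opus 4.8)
The plan is to trade the polynomial $F_{A,nr,1}$, whose degree $q^{nr-1}+1$ is enormous, for the polynomial $F_{A_j^*,j+s,j}$ of degree $q^{s}+1$, arranged so that the two share exactly their irreducible factors of degree $l>2$ with $l\mid Ds$; the count of $Ds$-degree factors can then be read off from the degree of the small polynomial. Accordingly the proof falls into three stages: the reduction of item~(1), the determination of the factor degrees of the small polynomial in item~(2), and a final counting of degrees.

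For item~(1) I would begin with Proposition~\ref{prop:equiv}: if $f\in\I_l$ divides $F_{A,nr,1}$ and $\alpha$ is a root of $f$, then $[A,\sigma_1]\ast\alpha=\alpha^{q^{nr}}$, and since $n\mid nr$ the Frobenius twist vanishes upon iteration, giving $[A,\sigma_1]^{k}\ast\alpha=\alpha^{q^{knr}}$ for all $k$. Taking $k=j$ and writing $jm=1+Dnu$ (possible since $jm\equiv 1\pmod{Dn}$), one gets $jnr=(j+s)+Dns\,u$; because $\deg f=l\mid Ds$ forces $nl\mid Dns$, one has $\alpha^{q^{Dnsu}}=\alpha$ and the exponent collapses to $[A,\sigma_1]^{j}\ast\alpha=\alpha^{q^{j+s}}$, which is exactly the condition that $f$ divides $F_{A_j^*,j+s,j}$. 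For the reverse inclusion I would use the first reduction of Section~2: as $j$ is the inverse of $m$ modulo $Dn$, $\gcd(j,\ord([A,\sigma_1]))=\gcd(j,nD)=1$, so $[A,\sigma_1]^{j}$-invariance coincides with $[A,\sigma_1]$-invariance; hence such an $f$ divides some $F_{A,nr',1}$, and Proposition~\ref{prop:degrees} forces $\deg f=Ds''$ with $s''\mid s$ and $\gcd(s'',n)=1$. Since then $r\equiv r'\pmod{s''}$ while the exponent comparison gives $r\equiv r'$ modulo $\deg f/\gcd(j,\deg f)$, the two congruences combine to $r\equiv r'\pmod{\deg f}$, so Lemma~\ref{lem:search} yields $f\mid F_{A,nr,1}$. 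This equality of factor sets gives in particular $Ds\cdot N\le\deg F_{A_j^*,j+s,j}=q^{s}+1$.

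For item~(2) I would transport the argument of Proposition~\ref{prop:degrees} to the small polynomial. A root $\alpha$ of $F_{A_j^*,j+s,j}$ yields, via $\beta=\alpha^{q^{j}}$, a root of $F_{A_j^*,s}$ satisfying $\beta^{q^{s}}=[A_j^*]\circ\beta$; iterating $n$ times and using $\gcd(s,n)=1$ produces a relation $\beta^{q^{ns}}=[M]\circ\beta$ in which $[M]$ is conjugate to $[A^*]^{j}$ and hence, since $\gcd(j,D)=1$, has projective order $D$. The same divisibility-of-degree argument as before then shows that every irreducible factor of degree $>2$ has degree $Dt$ with $t\mid s$, while the separability lemma guarantees the factorization is squarefree.

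Finally I would count. Squarefreeness gives the exact identity $q^{s}+1=Ds\cdot N+R$, where $R$ is the total degree of the remaining irreducible factors, so that $N=\frac{q^{s}}{Ds}+\frac{1-R}{Ds}$ and the whole problem reduces to bounding $R$. The factors of degree $Dt$ with $t\mid s$ and $t<s$ satisfy $t\le s/2$; running the reduction of item~(1) with $s$ replaced by $t$ bounds their total degree by $\sum_{t\mid s,\,t<s}(q^{t}+1)$, a geometric-type sum dominated by $q^{s/2}$. The factors of degree at most $2$ have all of their roots in $\F_{q^{2n}}$, so they contribute at most $q^{2n}$ to $R$. Thus $R=O\!\left(q^{\max\{2n,\,s/2\}}\right)$, which gives the stated bound $|H(s)|\le 3q^{e(s)}$. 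I expect the delicate step to be the reverse inclusion in item~(1): one must ensure that raising to the $j$-th power and descending again does not create spurious factors, and this is exactly what the coprimality $\gcd(j,nD)=1$ and the degree restrictions of Proposition~\ref{prop:degrees} prevent, with Lemma~\ref{lem:search} fixing the correct residue. The two independent sources of lower-order factors are what produce the term $\max\{2n,s/2\}$ in the exponent of the error.
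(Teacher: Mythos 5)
Your proposal is correct and follows the paper's overall architecture (trade $F_{A,nr,1}$ for the small polynomial $F_{A_j^*,j+s,j}$ via the inverse $j$ of $m$ modulo $Dn$, classify the factor degrees of the small polynomial, then count using separability), and your forward direction of item~(1) and the final counting are essentially identical to the paper's. Two sub-steps are executed by genuinely different routes. For the reverse inclusion of item~(1), the paper argues directly: from $[A,\sigma_1]^j\ast\alpha=\alpha^{q^{s+j}}$ it raises to the $m$-th power and exhibits the explicit conjugate $\beta=\alpha^{q^{mj-1}}$ of $\alpha$ as a root of $F_{A,nr,1}$, with no appeal to Lemma~\ref{lem:search}; you instead pass through $[A,\sigma_1]^j$-invariance, use $\gcd(j,nD)=1$ to get $[A,\sigma_1]$-invariance, and then pin down the correct residue $r$ via two congruences. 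Your route works (the congruence modulo $Ds''/\gcd(j,s'')$ does follow from comparing $jnr\equiv j+s\equiv jnr'\pmod{n\deg f}$, and the two moduli do lcm to $Ds''$ because $\gcd(j,D)=1$), but it is longer and leans on an unstated fact: to conclude that $f\mid F_{A_j^*,j+s,j}$ makes $f$ invariant under $[A,\sigma_1]^j$ you need $\alpha^{q^{j+s}}$ to be a conjugate of $\alpha$ over $\F_{q^n}$, i.e.\ $n\mid j+s$. This is true (from $jm\equiv 1$ and $sm\equiv -1\pmod n$ one gets $s\equiv -j\pmod n$) and is exactly the relation the paper records as ``$j\equiv n-s\pmod n$'' in its proof of item~(2), but you must state and prove it, since without it your chain of implications does not start. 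For item~(2) the paper transports the root to $\beta$ with $\alpha=\beta^{q^{n-s-j}}$, shows the minimal polynomial $g=\sigma_{n-s-j}(f)$ is $[A,\sigma_1]$-invariant, and quotes Propositions~\ref{prop:equiv} and~\ref{prop:degrees}; your iteration producing $\beta^{q^{ns}}=[M]\circ\beta$ with $[M]$ conjugate to $[A^*]^j$ is a valid alternative, but the conjugacy claim again rests on $s\equiv -j\pmod n$ (so that the product of the twists $\sigma_{ks}(A_j^*)$ is a cyclic rotation of the product defining $(A^*)^j$), and the ``same divisibility argument'' needs to be spelled out to extract both $D\mid l$ and $t\mid s$. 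In short: same strategy, correct but more roundabout at the reverse inclusion, and the single fact $n\mid j+s$ must be made explicit for the argument to close.
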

\begin{proof}
We split the proof into cases.

\begin{enumerate}
    \item Write $jm=Dnk+1$, let $l$ be a divisor of $Ds$ and let $f$ be an $l$-degree irreducible factor of $F_{A, nr, 1}$. If $\alpha\in \overline{\F}_q\setminus {\fqn}$ is a any root of $f$, from definition,
    $[A, \sigma_1]\ast \alpha=\alpha^{q^{nr}}$.Therefore, $$[A_j^*, \sigma_j]\ast \alpha=\alpha^{q^{nrj}}=\alpha^{q^{msj+j}}=\alpha^{q^{s+j+Dnsk}}=\alpha^{q^{s+j}}.$$
    The last equality is due to fact that, since $f$ has degree $l$ and $l$ divides $Ds$, we have that
    $\alpha^{q^{Dnsk}}=\alpha$.
    In particular, $\alpha$ is a root of $F_{A_j^*, j+s, j}$, hence such a polynomial is divisible by $f$. Conversely, if $f$ has degree $l$ and divides $F_{A_j^*, j+s, j}$, we have that any root $\alpha$ of $f$ satisfies $[A, \sigma]^j\ast \alpha=\alpha^{q^{s+j}}=\alpha^{q^{nrj}}$ (again, we used the fact that $l$ divides $Ds$). The last equality implies that
    $$[A, \sigma_{mj}]\ast \alpha=\alpha^{q^{nrjm}},$$
    where we used the fact that $[A, \sigma_1]^{mj-1}=[I, \sigma_{mj-1}]$.
    If we set $\beta=\alpha^{q^{Dnk}}=\alpha^{q^{mj-1}}$, the previous equality is equivalent to 
    $$[A, \sigma_1]\ast \beta=\beta^{q^{(nr-1)jm+1}}=\beta^{q^{mk(nDs)+nr}}=\beta^{q^{nr}},$$
    since $\beta$ is a conjugate of $\alpha$ over $\fqn$ and $l$ divides $Ds$. The last equality entails that $\beta$ is a root of $F_{A, nr, 1}$. Since $\alpha$ and $\beta$ are conjugates over $\fqn$, it follows that $f$ divides $F_{A, nr, 1}$. 
    
    We observe that, from definition, the polynomial $F_{A_j^*, j+s, j}$ has degree at most $q^{s}+1$, hence the number of $Ds$-degree irreducible divisors of the polynomial $F_{A, nr, 1}$ is at most $\frac{q^{s}+1}{Ds}$. 
    
\item Let $f$ be an $l$-degree irreducible factor of $F_{A_j^*, j+s, j}$, where $l>2$, and pick $\alpha\in \overline{\F}_q\setminus\fqn$ a root of $f$. From definition, $[A_j^*, \sigma_j]\ast \alpha=\alpha^{q^{s+j}}$, hence \begin{equation}\label{eq:key}[A_j^*, \sigma_{n-s}]\ast \beta=\beta^{q^n},\end{equation} 
    where $\alpha=\beta^{q^{n-s-j}}$. Let $g$ be the minimal polynomial of $\beta$ over $\fqn$. From definition, $g=\sigma_{n-s-j}(f)$ and so $g$ also has degree $l$. Additionally, from definition, $j\equiv n-s\pmod n$ and so Eq.~\eqref{eq:key} entails that $g$ is $[A_j^*, \sigma_j]$-invariant. Since $j$ is relatively prime with $nD$ (the order of $[A, \sigma_1]$), it follows that $g$ is also $[A, \sigma_1]$-invariant. Since $l>2$, it follows from Propositions~\ref{prop:equiv}
 and~\ref{prop:degrees} that $l=Dt$ for some positive integer $t$. Again, since $j\equiv n-s\pmod n$, we have that $\sigma_{n-s}(A_j^*)=\sigma_j(A_j^*)$ and so Eq.~\eqref{eq:key} entails that
 $$[A_{jnD}^*, \sigma_{nD(n-s)}]\ast \beta=[A_j^*, \sigma_j]^{nD}\ast \beta=\beta^{q^{n^2D}}.$$
    However, $[A_{jnD}^*]=[(A^*)^{jD}]=[I]$ and so the previous equality implies that
    $\beta^{q^{nD(n-s)}}=\beta^{q^{n^2D}}$ or, equalivalently, $\beta^{q^{nDs}}=\beta$. In particular, $l$ divides $Ds$, i.e., $t$ divides $s$. Therefore, $f$ has degree of the form $Dt$, where $t$ divides $s$.
\end{enumerate}
We observe that, from definition, $F_{A_j^*, j+s, j}$ is separable and has degree $q^s$ or $q^{s}+1$. For each divisor $t$ of $s$, let $C_t$ be the set of the $Dt$-degree irreducible divisors of $F_{A_j^*, j+s, j}$ and set $N=|C_s|$. If $\mu$ denotes the number of irreducible divisors of $F_{A_j^*, j+s, j}$ of degree at most two, item (ii) entails that
\begin{equation}
q^{s}+1\ge N\cdot Ds+\sum_{t|s\atop{t<s}}|C_t|\cdot Dt\ge q^{s}-2\mu    
\end{equation}

We observe that the upper bound is due to fact that the equalities $Dt=1$ or $Dt=2$ may occur. Fix $t$ a divisor of $s$ and write $nr-1=t(s/tm)$. Item 1 and Proposition~\ref{prop:degrees} entail that either $C_t$ is empty or $s/t$ is relatively prime with $D$ and, in the former case, we have that $C_t$ has at most $\frac{q^t+1}{Dt}$ elements. In any case, $Dt\cdot |C_t|\le q^t+1$. Therefore, 
$$\sum_{t|s\atop{t<s}}|C_t|\cdot Dt\le \sum_{1\le i\le s/2}(q^t+1)\le s(q^{s/2}+1)/2.$$
In particular, 
$$\frac{q^s}{Ds}+1>N>\frac{q^s-2\mu -s(q^{s/2}+1)/2}{Ds}>\frac{q^s}{Ds}-3q^{e(s)},$$
where $e(s)=\max\{2n, s/2\}$, since the number of monic irreducible polynomials of degree at most two over $\fqn$ equals $q^{2n}$ and $1<q^{s/2}$.
\end{proof}

It remains to prove that, for any $s$ such that $\gcd(s, n)=1$, there exist positive integers $r$ for which $s$ divides $nr-1$ and $nr-1=sm$ with $\gcd(m, D)=1$. In the following lemma, we obtain the exact number of incongruent values of $r$ modulo $Ds$ with such a property.

\begin{lemma}\label{lem:count}
Let $s, n, D$ be positive integer such that $\gcd(s, n)=1$. Then there exists $\varphi(D)$ positive integers $r\le Ds$ such that $s$ divides $rn-1$ with $rn-1=sm$ and $\gcd(m, D)=1$.
\end{lemma}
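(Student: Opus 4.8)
The plan is to convert both requirements into congruences and reduce the problem to a residue count. The divisibility $s\mid rn-1$ is the linear congruence $rn\equiv 1\pmod s$. Since $\gcd(s,n)=1$, the residue $n$ is a unit modulo $s$, so this congruence has a unique solution $r\equiv r_0\pmod s$. Consequently, among the integers $r\in\{1,\dots,Ds\}$ there are exactly $D$ that satisfy $s\mid rn-1$, namely those lying in the single residue class $r_0\pmod s$. Fixing the representative $r_0\in\{1,\dots,s\}$, these are the values $r=r_0+js$ with $j\in\{0,1,\dots,D-1\}$, and one checks immediately that they all lie in $\{1,\dots,Ds\}$.

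For each such $r$ we have $m=(rn-1)/s=m_0+jn$, where $m_0=(r_0n-1)/s$ is a fixed integer. Thus the quantity to be computed is
$$\#\{\,j\in\{0,1,\dots,D-1\}\ :\ \gcd(m_0+jn,\,D)=1\,\}.$$
Because $j$ ranges over a complete residue system modulo $D$, the whole question reduces to understanding the image of the affine map $j\mapsto m_0+jn$ on $\Z/D\Z$: if this map is a bijection of $\Z/D\Z$, then as $j$ runs through $\{0,\dots,D-1\}$ the value $m_0+jn$ hits every class modulo $D$ exactly once, so the number of $j$ with $\gcd(m_0+jn,D)=1$ is precisely the number of units of $\Z/D\Z$, which is $\varphi(D)$ by definition.

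The key step, and the one I expect to demand the most care, is establishing that $j\mapsto m_0+jn$ is indeed a bijection of $\Z/D\Z$; this is exactly the statement that $n$ acts invertibly modulo $D$, i.e. that $\gcd(n,D)=1$. This coprimality is automatic in the classical case $n=1$ of~\cite{ST12}, where the map is the translation $j\mapsto m_0+j$ and the count $\varphi(D)$ is immediate. In the general situation, however, if $\gcd(n,D)=d>1$ the values $m_0+jn$ are trapped in a single coset of $d\Z/D\Z$, each hit $d$ times, and the number coprime to $D$ need not equal $\varphi(D)$; so $\gcd(n,D)=1$ is the essential ingredient that must be supplied from the ambient structure (or retained as a standing hypothesis on $D=\ord([A^*])$). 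Once this coprimality is in hand, the remaining work is routine bookkeeping: confirming that the $D$ representatives $r_0+js$ exhaust the residue class inside $\{1,\dots,Ds\}$ and invoking the totient count to conclude that there are exactly $\varphi(D)$ admissible values of $r$.
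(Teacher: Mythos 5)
Your argument follows the same route as the paper's own proof: solve $rn\equiv 1\pmod s$ using $\gcd(n,s)=1$ to get the $D$ candidates $r_0+js$, $0\le j<D$, inside $\{1,\dots,Ds\}$, and then count those $j$ for which $m=m_0+jn$ is coprime to $D$. The difference is that your bookkeeping is correct where the paper's is not: the paper sets $R$ with $Rn\equiv 1\pmod s$, $M=(Rn-1)/s$ and $r_i=R+is$, then asserts that the relevant quantities are $M+i$ and that $\{M+i\}_{0\le i<D}$ is a complete residue system modulo $D$. In fact $(r_in-1)/s=M+in$ (your $m_0+jn$), and $\{M+in\}_{0\le i<D}$ is a complete residue system modulo $D$ only when $\gcd(n,D)=1$. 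So the ``key step'' you flag is precisely the point at which the published proof goes wrong, not merely a detail you failed to supply.

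Your suspicion is justified: the coprimality $\gcd(n,D)=1$ does not follow from the stated hypotheses, and the lemma is false without it. Take $n=D=2$, $s=1$: every $r\le 2$ gives $m=2r-1$ odd, hence coprime to $D=2$, so the count is $2$, not $\varphi(2)=1$. In general, writing $d=\gcd(n,D)$ and using $\gcd(m_0,n)=1$ (which holds because $sm_0=r_0n-1\equiv -1\pmod n$), the values $m_0+jn$ cover the coset $m_0+d\Z/D\Z$ with multiplicity $d$, and the number of units of $\Z/D\Z$ lying in that coset is $\varphi(D)/\varphi(d)$; so the correct count is $d\,\varphi(D)/\varphi(d)$, which strictly exceeds $\varphi(D)$ whenever $d>1$. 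Nothing in the ambient situation rules this out: $D=\ord([A^*])$ is the order of an element of $\PGL(2,q^n)$ and can share a factor with $n$ (for instance $[A]\in\PGL(2,q)$ of order $4$ with $n=2$ gives $D=\ord([A]^2)=2$). So either $\gcd(n,D)=1$ must be added as a hypothesis, or the constant $\varphi(D)$ in the lemma (and consequently in Theorem~\ref{thm:main}) must be replaced by $d\,\varphi(D)/\varphi(d)$.
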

\begin{proof}
Let $R$ be the least positive integer such that $Rn\equiv 1\pmod n$ and set $M=\frac{Rn-1}{s}$. Hence, $s$ divide the integers $r_in-1$, where $r_i=R+i\cdot s$, with $0\le i< D$. Additionally, $r_in-1=M+i$. We observe that the integers $r_i$ are all between $1$ and $Ds$. We observe that the set $\{M+i\}$ with $0\le i <D$ is a set of complete residues modulo $M$, hence the number of integers $r_i$ such that $\gcd (M+i, D)=1$ is, by definition, $\varphi(D)$.
\end{proof}

It is straightforward to see that the previous lemma, combined with Theorem~\ref{thm:aux} and Lemma~\ref{lem:search}, provide the proof of Theorem~\ref{thm:main}.

\section{The special subgroup $\PGL(2, q)\times \Gal(\fqn/\F_q)$}
In this section, we study the $[A, \sigma_i]$-invariants in the special case that $[A]\in \PGL(2, q)$. In other words, we are restricting our action to the group $$\PGL(2, q)\times \Gal(\fqn/\F_q)\le \PTL(2, q^n).$$ As pointed out earlier, there is no loss of generality on considering elements of the form $[A, \sigma_1]$. In particular, we may only look at elements of the form $[A, \sigma_i]$ with $[A]\in \PGL(2, q)$ and $1\le i\le n$ such that $\gcd(i, n)=1$.

We fix $[A]\in \PGL(2, q)$,  $d=\ord([A])$ and $d_0=\gcd(d, n)$. Therefore, the order of $[A, \sigma_i]$ equals $nd/d_0$. In the notation of Theorem~\ref{thm:main}, we have that $[A^*]=[A]^n$ and so $D=\ord([A^*])=d/d_0$. Following the same steps of Propositions~\ref{prop:equiv} and~\ref{prop:degrees}, we can show that the $[A, \sigma_i]$-invariants of degree greater than $2$ have degree of the form $\frac{d}{d_0}\cdot s$, where $\gcd(s, n)=1$. As follows, we have a nice relation between $[A, \sigma_j]$-invariants with $\gcd(j, n)=1$ and the $[A]$-invariants considered in~\cite{ST12}.

\begin{theorem}\label{thm:equivalence}
Fix $[A]\in \PGL(2, q)$ an element of order $d$ and set $d_0=\gcd(d, n)$. Then, for any $s$ such that $\gcd(s, n)=1$ and any monic irreducible polynomial $f\in \F_{q^n}[x]$ of degree $\frac{d}{d_0}\cdot s>2$, the following are equivalent:

\begin{enumerate}[(i)]
\item $f$ is $[A, \sigma_i]$-invariant for some $1\le i\le n$ such that $\gcd(i, n)=1$;
\item  $f$ is $[A, \sigma_i]$-invariant for some $1\le i\le n$ such that $\gcd(i, n)=1$ and the least positive integer $t$ such that $f\in \F_{q^t}[x]$ is $t=d_0$;
\item $f$ divides a $ds$-degree monic irreducible polynomial $G\in \F_q[x]$ that is $[A]$-invariant, i.e., $[A]\circ G=G$.
\end{enumerate}
\end{theorem}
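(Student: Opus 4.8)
The plan is to prove the equivalences cyclically, in the order (ii) $\Rightarrow$ (i) $\Rightarrow$ (iii) $\Rightarrow$ (ii); the first implication is immediate, since (ii) is just (i) together with an extra assertion on the field of definition of $f$. The whole argument is carried out at the level of a fixed root $\alpha\in\overline{\F}_q\setminus\fqn$ of $f$, exploiting the crucial simplification that, because $[A]\in\PGL(2,q)$ has a representative with entries in $\F_q$, the Frobenius commutes with the M\"obius action; concretely $[A,\sigma_i]\ast\alpha=[A]\circ\alpha^{q^i}=([A]\circ\alpha)^{q^i}$. The single arithmetic invariant driving everything is the integer $m$, defined modulo $\deg_{\F_q}\alpha$ by $[A]\circ\alpha=\alpha^{q^m}$, which exists as soon as one of the invariance conditions holds. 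Throughout I will use the order argument: iterating $[A]^k\circ\alpha=\alpha^{q^{mk}}$, and noting that the relevant degree of $\alpha$ exceeds $2$, one has $[A]^k\circ\alpha=\alpha \iff d\mid k$ (order of $[A]$) while $\alpha^{q^{mk}}=\alpha \iff (\deg_{\F_q}\alpha)\mid mk$.

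For (i) $\Rightarrow$ (iii): assuming $f$ is $[A,\sigma_i]$-invariant with $\gcd(i,n)=1$, the equivalence between invariance and the existence of $\ell$ with $[A,\sigma_i]\ast\alpha=\alpha^{q^{n\ell}}$, established exactly as in the proof of Proposition~\ref{prop:equiv}, gives $[A]\circ\alpha=\alpha^{q^m}$ with $m\equiv -i\pmod n$; in particular $\gcd(m,n)=\gcd(i,n)=1$. Hence $[A]\circ\alpha$ is an $\F_q$-conjugate of $\alpha$, so $G:=\mathrm{minpoly}_{\F_q}(\alpha)$ is $[A]$-invariant and $f\mid G$. To pin down $\deg G=:N$, the order argument yields $d\mid N$, $N\mid md$ and $N/\gcd(N,m)=d$; writing $N=du$ this means $u\mid m$ and $\gcd(d,m/u)=1$. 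Since $\gcd(m,n)=1$ we get $\gcd(u,n)=1$, whence $\gcd(N,n)=\gcd(d,n)=d_0$; feeding this into $\deg_{\fqn}\alpha=N/\gcd(N,n)=\frac{d}{d_0}s$ forces $u=s$, i.e. $\deg G=ds$, as required.

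For (iii) $\Rightarrow$ (ii): start from an $[A]$-invariant irreducible $G\in\F_q[x]$ of degree $ds$ with root $\alpha$ and $f\mid G$. First, the field of definition: the $q^n$-Frobenius orbit of $\alpha$ is $\{\alpha^{q^j}: j\in\langle n\rangle\subseteq\Z/ds\Z\}=\{\alpha^{q^j}: d_0\mid j\}$, using $\gcd(ds,n)=d_0$ (which holds because $\gcd(s,n)=1$); thus $\sigma_t$ stabilises this root set exactly when $d_0\mid t$, so the least $t$ with $f\in\F_{q^t}[x]$ is $t=d_0$. Second, the invariance: the order argument with $\deg_{\F_q}\alpha=ds$ gives $\gcd(ds,m)=s$, so $m=sm'$ with $\gcd(m',d)=1$; since $d_0\mid d$ and $\gcd(s,n)=1$ yield $\gcd(s,d_0)=\gcd(m',d_0)=1$, we obtain $\gcd(m,d_0)=1$. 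Now $[A,\sigma_i]\ast\alpha=\alpha^{q^{m+i}}$ is an $\fqn$-conjugate of $\alpha$ precisely when $m+i\equiv 0\pmod{d_0}$, so it suffices to produce $i$ with $1\le i\le n$, $i\equiv -m\pmod{d_0}$ and $\gcd(i,n)=1$; as $-m$ is a unit modulo $d_0$ and $d_0\mid n$, a routine CRT construction (prescribing $i$ on the primes dividing $d_0$ and choosing it prime to the remaining prime factors of $n$) furnishes such an $i$, and then $f$ is $[A,\sigma_i]$-invariant.

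I expect the arithmetic bookkeeping of the exponent $m$ to be the main obstacle, concentrated in the two places where the hypothesis $\gcd(s,n)=1$ is converted into $\gcd(ds,n)=d_0$ and into $\gcd(m,d_0)=1$, together with the final CRT selection of an admissible $i$ coprime to $n$ inside a prescribed residue class modulo $d_0$. The exact determination $\deg G=ds$ in (i) $\Rightarrow$ (iii), which must exclude any proper divisor or multiple, is the other delicate point; both rest on the structural description of the invariant exponent that follows the lines of \cite{ST12} and Propositions~\ref{prop:equiv} and~\ref{prop:degrees}.
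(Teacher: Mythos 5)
Your proof is correct, but it takes a genuinely different route from the paper's. You prove the cycle (ii)$\Rightarrow$(i)$\Rightarrow$(iii)$\Rightarrow$(ii) entirely at the level of a single root $\alpha$, encoding everything in the exponent $m$ defined by $[A]\circ\alpha=\alpha^{q^m}$ (which makes sense because $A$ has entries in $\F_q$, so Frobenius commutes with the M\"obius action) and extracting both $\deg G=ds$ and the admissible residues of $i$ by gcd bookkeeping ($N/\gcd(N,m)=d$, $\gcd(ds,m)=s$, $\gcd(m,d_0)=1$) followed by a CRT choice of $i$. The paper instead proves (i)$\Rightarrow$(ii)$\Rightarrow$(iii)$\Rightarrow$(i) at the level of polynomials: it first pins down the minimal field of definition $\F_{q^{d_0}}$ of $f$, then exhibits the explicit factorization $G=\prod_{j=1}^{d_0}[A]^{j}\circ f$ of the $[A]$-invariant $G$ into the M\"obius--Frobenius orbit of $f$, quoting Theorem 4.5 of \cite{ST12} for the degree-divisibility constraints and Dirichlet's theorem on primes in arithmetic progressions to adjust $i$ so that $\gcd(i,n)=1$. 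Your version buys a more self-contained and elementary argument (CRT replaces Dirichlet; the order argument on $\alpha$ replaces the appeal to the external degree theorem), at the cost of losing the structural factorization $G=\prod_j[A]^{j}\circ f$, which the paper reuses later (e.g.\ in the remark on constructing SCRIM polynomials). I checked the two compressed steps you flag and they do go through: in (i)$\Rightarrow$(iii) the chain $u\mid m$, $\gcd(u,n)=1$, $\gcd(N,n)=d_0$, $N/d_0=\tfrac{d}{d_0}s$ forces $\deg G=ds$ exactly; and in the CRT step the modulus $d_0\cdot\prod_{p\mid n,\ p\nmid d_0}p$ divides $n$, so an admissible $i$ with $1\le i\le n$, $i\equiv-m\ (\mathrm{mod}\ d_0)$ and $\gcd(i,n)=1$ exists. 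Both points deserve a sentence of justification in a final write-up, but there is no gap.
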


\begin{proof}
We split the proof into cases.
\begin{itemize}
\item To see that (i) implies (ii), suppose that $f$ is $[A, \sigma_i]$-invariant for some $1\le i\le n$ such that $\gcd(i, n)=1$ and let $t$ be the least positive integer such that $f\in \F_{q^t}[x]$. Since $f\in \F_{q^n}[x]$, we have that $t$ divides $n$. In particular, we have that $f$ is also $[A, \sigma_i]^t=[A^{t}, \sigma_{ti}]$-invariant and, since $f\in \F_{q^t}[x]$, it follows that $[A]^t\circ f=f$, i.e., $f$ is $[A]^t$-invariant. From Theorem 4.5 in \cite{ST12}, $\frac{d}{d_0}\cdot s>2$ is divisible by $\ord([A]^t)=d/\gcd(t, d)$. Since $t$ divides $n$ and $s$ is relatively prime with $n$, we necessarily have that $\gcd(t, d)=d_0$ and so $t$ is divisible by $d_0$. However, we also have that $f$ is $[A, \sigma_i]^d=[I, \sigma_{id}]$-invariant and so $f\in \F_{q^{id}}[x]$. Since $\gcd(n, id)=d_0$, we have that $f\in \F_{q^{d_0}}[x]$ and so $t$ divides $d_0$. In conclusion, $t=d_0$.
\item To see that (ii) implies (iii), suppose that  $f$ is $[A, \sigma_i]$-invariant for some $1\le i\le n$ such that $\gcd(i, n)=1$ and the least positive integer $t$ such that $f\in \F_{q^t}[x]$ is $t=d_0$. In particular, $\sigma_{d_0}(f)=f$ and so $f$ is $[A]^{d_0}$-invariant, i.e., $[A]^{d_0}\circ f=f$. Additionally, if $G\in \F_q[x]$ denotes the unique monic irreducible polynomial that is divisible by $f$, we have that $G$ is just the minimal polynomial (over $\F_q$) of any root of $f$. From the minimality of $d_0$, we necessarily have that
\begin{equation}\label{eq:mobius-factor}G(x)=f(x)\cdot \prod_{j=1}^{d_0-1}\sigma_j(f(x))=f(x)\cdot \prod_{j=1}^{d_0-1}\sigma_{ij}(f(x)),\end{equation}
where in the last equality we used the fact that $\sigma_l(f(x))=\sigma_{l_0}(f(x))$ whenever $l\equiv l_0\pmod {d_0}$ and $\gcd(i, d_0)=1$. However, since $f$ is $[A, \sigma_i]$-invariant, we have that $\sigma_i(f(x))=[A]^{-1}\circ f$ and, more generally, $\sigma_{ij}(f(x))=[A]^{-j}\circ f$ for any $j\in \Z$. Therefore, Eq.~\eqref{eq:mobius-factor} and the equality $[A]^{d_0}\circ f=f$ yields
$$G=\prod_{j=1}^{d_0}[A]^{-j}\circ f(x)=\prod_{j=1}^{d_0}[A]^{j}\circ f(x).$$
Again, since $[A]^{d_0}\circ f=f$, we have that $[A]\circ G=G$. 

\item To see that (iii) implies (i), suppose that $f$ divides a $ds$-degree monic irreducible polynomial $G\in \F_q[x]$ that is $[A]$-invariant. Since $f\in \F_{q^n}[x]$ and $\gcd(n, ds)=d_0$, we necessarily have that $G$ splits into $d_0$ monic irreducible polynomials over $\F_{q^{d_0}}$, each of degree $\frac{d}{d_0}\cdot s$. In particular, $f\in \F_{q^{d_0}}[x]$. In addition, since $G\in \F_q[x]$, the factorization of $G$ over $\F_q$ is given as follows
$$G(x)=f(x)\cdot \prod_{j=1}^{d_0-1}\sigma_j(f(x)).$$
Since $G$ is $[A]$-invariant, we have that $[A]\circ f=\sigma_{d_0-j}(f)$ for some $0\le j\le d_0-1$. We claim that $\gcd(j, d_0)=1$. Set $k=\gcd(j, d_0)$. Therefore, $d_0(d_0-j)/k$ is divisible by $d_0$ and so the equality $[A]\circ f=\sigma_{d_0-j}(f)$ yields
$$[A]^{d_0/k}\circ f=\sigma_{\frac{d_0(d_0-j)}{k}}(f)=f.$$
We observe that the element $[A]^{d_0/k}$ has order $dk/d_0$. Therefore, from Theorem 4.5 in \cite{ST12}, $dk/d_0$ divides the degree of $f$, i.e., $k$ divides $s$. However, $k$ divides $d_0$, hence it divides $n$. Since $\gcd(s, n)=1$, we have that $k=1$. In particular, $f$ is $[A, \sigma_i]$-invariant for some $i\le d_0$ such that $\gcd(i, d_0)=1$. Since $f\in \F_{q^{d_0}}[x]$, the latter implies that $f$ is $[A, \sigma_{i+d_0l}]$-invariant for any positive integer $l$. Since $\gcd(i, d_0)=1$, the Dirichlet Theorem ensures the existence of infinitely many prime numbers of the form $i+d_0l$. If we pick a prime $P$ of this form in a way that $P>n$ and $p_0$ is the least positive integer such that $P\equiv p_0\pmod n$, we have that $f$ is $[A, \sigma_{p_0}]$-invariant, $1\le p_0\le n$ and $\gcd(p_0, n)=1$. 
\end{itemize}
\end{proof}

\subsection{On self-conjugate-reciprocal polynomials}
In~\cite{BJU}, the authors introduced the so called {\em self-conjugate-reciprocal irreducible monic} (SCRIM) polynomials. These are the monic irreducible polynomials $f\in \F_{q^2}[x]$ (distinct from $x$) such that its \emph{monic reciprocal} $f^*(x):=f(0)^{-1}\cdot x^{\deg(f)}f(1/x)$ coincides with its \emph{conjugate} $\sigma_1(f)$ over $\F_q$, i.e., 
$$f^*(x)=\sigma_1(f).$$
This is a nice variation of the so called {\em self-reciprocal irreducible monic} (SRIM) polynomials~\cite{MG90}, which are the polynomials satisfying $f^*(x)=f(x)$.
The authors explored the existence and number of these SCRIM polynomials and some of their results can be compiled as follows.

\begin{theorem}\label{them:BJU}
The degree of any SCRIM is odd. In addition, if $n$ is odd and $D_n$ denotes the set of divisors of $q^n+1$ that do not divide any $q^k+1$ with $0\le k\le n-1$, then the number of $n$-degree SCRIM's equals
$$\frac{1}{n}\sum_{d\in D_n}\varphi(d).$$
\end{theorem}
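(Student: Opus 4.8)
The plan is to identify SCRIM polynomials with the fixed points of a concrete element of $\PTL(2,q^2)$ and then run the machinery of Sections~2 and~3 in the case where the base extension has degree $2$. Here the polynomials live over $\F_{q^2}$, so in the notation of the paper we are in the situation $n=2$ (this extension degree should not be confused with the degree $n$ of the SCRIM in the statement). Let $R=\left(\begin{smallmatrix}0&1\\1&0\end{smallmatrix}\right)$. Since $[R]\circ f=f^{*}$ and $R$ has entries in the prime field, the defining relation $f^{*}=\sigma_1(f)$ is equivalent to $[R]\circ\sigma_1(f)=f$, that is, $[R,\sigma_1]\ast f=f$ (we used $R^2=I$). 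Thus the SCRIM polynomials are precisely the $[R,\sigma_1]$-invariants. Moreover $R^{*}=R\sigma_1(R)=R^{2}=I$, so in the notation of Theorem~\ref{thm:main} we have $D=\ord([R^{*}])=1$.

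To prove that every SCRIM has odd degree, I would specialize the degree dichotomy of Theorem~\ref{thm:main} (equivalently Proposition~\ref{prop:degrees}): with $D=1$ and $n=2$, any $[R,\sigma_1]$-invariant of degree greater than $2$ has degree $Ds=s$ with $\gcd(s,2)=1$, hence odd. The finitely many cases of degree $1$ and $2$ are then disposed of directly; in particular a short computation rules out degree-$2$ SCRIMs. Alternatively, the odd-degree statement will also fall out of the root computation below.

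For the enumeration I would pass to roots via Proposition~\ref{prop:equiv}. Let $\alpha$ be a root of an $n$-degree SCRIM $f$; then $f$ is $[R,\sigma_1]$-invariant iff $[R,\sigma_1]\ast\alpha$ is an $\F_{q^2}$-conjugate of $\alpha$. A direct evaluation gives
$$[R,\sigma_1]\ast\alpha=[R]\circ\alpha^{q}=\frac{1}{\alpha^{q}}=\alpha^{-q},$$
so the invariance condition reads $\alpha^{-q}=\alpha^{q^{2r}}$ for some $r\ge 0$; since $\alpha^{q^{e}}=\alpha^{-1}$ exactly when $\ord(\alpha)\mid q^{e}+1$, this is equivalent to $\ord(\alpha)\mid q^{e}+1$ for some odd $e\ge 1$. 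Hence the roots of SCRIM polynomials are exactly those roots of unity whose order divides $q^{e}+1$ for some odd $e$, and such a root produces a degree-$n$ SCRIM precisely when its degree over $\F_{q^2}$, namely $\ord_{d}(q^{2})$ with $d=\ord(\alpha)$, equals $n$.

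The remaining work is purely number-theoretic, and this is where I expect the main obstacle to lie. For $d$ coprime to $q$ put $m=\ord_d(q)$; then $\ord_d(q^{2})=m/\gcd(m,2)$, and $q^{j}\equiv-1\pmod d$ is solvable iff $m$ is even, the least solution being $j=m/2$. Combining these facts with $n$ odd, I would establish the key equivalence
$$d\in D_n\iff d\mid q^{n}+1\ \text{and}\ \ord_d(q^{2})=n\iff \ord_d(q)=2n,$$
which at once shows that the oddness of $e$ is automatic once $n$ is odd (take $e=n$) and that $n$ is forced to be odd. Granting this, for each $d\in D_n$ all $\varphi(d)$ elements of order $d$ are roots of $n$-degree SCRIMs, while no other order contributes; since each $n$-degree irreducible polynomial has exactly $n$ roots, the number of $n$-degree SCRIMs equals $\frac{1}{n}\sum_{d\in D_n}\varphi(d)$. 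The delicate point is exactly the displayed equivalence: tracking the order of $q$ against that of $q^{2}$ modulo $d$, the parity of the least $j$ with $q^{j}\equiv-1\pmod d$, and the minimality condition built into the definition of $D_n$, together with a careful check of the degenerate small-order cases.
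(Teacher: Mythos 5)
First, a point of comparison: the paper does not prove this statement. Theorem~\ref{them:BJU} is quoted from \cite{BJU} (Theorems~3.3 and~3.15 there) as background, and the only SCRIM count the paper actually derives is Corollary~\ref{cor:scrim}, which goes a genuinely different way: it uses Theorem~\ref{thm:equivalence} to identify $n$-degree SCRIMs over $\F_{q^2}$ with the irreducible factors of $2n$-degree self-reciprocal irreducible monic polynomials over $\F_q$ and then imports the Meyn--G\"otz formula, obtaining $\frac{1}{n}\sum_{d\mid n}\mu(d)q^{n/d}$ rather than $\frac{1}{n}\sum_{d\in D_n}\varphi(d)$. Your route --- realizing SCRIMs as the $[R,\sigma_1]$-invariants, passing to roots via Proposition~\ref{prop:equiv}, and classifying the admissible orders $\ord(\alpha)$ --- is the natural way to recover the $\varphi$-formula and is in the spirit of \cite{BJU} itself. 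The reduction to ``$\ord(\alpha)\mid q^e+1$ for some odd $e\ge 1$'' is correct (so is the computation $[R,\sigma_1]\ast\alpha=\alpha^{-q}$ and the oddness of the degree via Proposition~\ref{prop:degrees} with $D=1$ and ambient extension degree $2$).

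The one step that is wrong as written is the last link of your displayed equivalence. For composite $d$ it is false that $q^{j}\equiv-1\pmod d$ is solvable whenever $m=\ord_d(q)$ is even, so $\ord_d(q)=2n$ does \emph{not} imply $d\mid q^n+1$: take $q=3$, $d=8$, $n=1$; then $\ord_8(3)=2$ but $8\nmid 3+1$. What is true, and what your argument actually needs, is the converse direction: if $q^{j}\equiv-1\pmod d$ has a solution and $d>2$, then $m$ is even and the solution set is exactly $m/2+m\Z$, so the least $k$ with $d\mid q^k+1$ is $m/2=\ord_d(q^2)$. Keeping the middle characterization ``$d\mid q^n+1$ and $\ord_d(q^2)=n$'' and discarding the final ``$\iff\ord_d(q)=2n$'', the equivalence with $d\in D_n$ and the count $\frac1n\sum_{d\in D_n}\varphi(d)$ go through for all $d>2$. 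Finally, the ``degenerate small-order cases'' you defer are not entirely harmless: for $n\ge 3$ the orders $d\le 2$ only produce linear polynomials and may be ignored, but for $n=1$ the polynomials $x-1$ (and $x+1$ in odd characteristic) are SCRIMs whose roots have order $d\le 2\notin D_1$, so the stated formula itself is delicate at $n=1$; this should be checked explicitly rather than waved at, even if it reflects a boundary issue in the quoted statement as much as in your proof.
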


For more details, see~Theorems~3.3 and~3.15 of~\cite{BJU}. More recently, the same authors used such polynomials in the study of Hermitian codes over finite rings~\cite{BJU-2}. We observe that this conjugate-reciprocal operation can be brought to the context of the present paper. In fact, for $B=\left(\begin{matrix}
0&1\\
1&0
\end{matrix}\right)$ and $\mathfrak g_0=[B, \sigma_1]\in \PTL(2, q^2)$, we have that the $\mathfrak g_0$-invariants are exactly the SCRIM's. Notice that $[B]$ has order $2$. In particular, we obtain the following result.

\begin{cor}\label{cor:scrim}
Fix $n$ an odd integer. Then the $n$-degree monic irreducible polynomial $f\in \F_{q^2}[x]$ is an SCRIM polynomial if and only if $f$ divides a $2n$-degree self-reciprocal irreducible monic polynomial $G\in \F_q[x]$. In particular, the number of $n$-degree SCRIM's equals
$$\frac{1}{n}\sum_{d|n}\mu(d)q^{n/d}.$$
\end{cor}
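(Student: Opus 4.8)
The plan is to derive this corollary directly from Theorem~\ref{thm:equivalence}. First I would specialize that theorem to the ambient field $\F_{q^2}$, so that the integer denoted $n$ there (the degree of the ground extension) equals $2$, and take $[A]=[B]$ with $B=\left(\begin{matrix}0&1\\1&0\end{matrix}\right)$. A one-line computation gives $[B]\circ G=G^*$ for every monic $G$, so the $[B]$-invariants over $\F_q$ are exactly the self-reciprocal irreducible monic polynomials; and, as already noted just before the statement, the $\mathfrak g_0$-invariants over $\F_{q^2}$ are exactly the SCRIM polynomials. Here $d=\ord([B])=2$ and $d_0=\gcd(2,2)=2$, hence $D=d/d_0=1$ and the relevant degrees are $\frac{d}{d_0}\cdot s=s$ with $\gcd(s,2)=1$, i.e.\ $s$ odd. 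Since $\gcd(i,2)=1$ forces $i=1$, condition (i) of Theorem~\ref{thm:equivalence} reads precisely ``$f$ is $\mathfrak g_0$-invariant'', i.e.\ ``$f$ is SCRIM''. Matching the degree $s$ of $f$ with the corollary's $n$ (so $s=n$), the equivalence (i)$\Leftrightarrow$(iii) says exactly that an $n$-degree monic irreducible $f\in\F_{q^2}[x]$ is SCRIM if and only if it divides a $2n$-degree self-reciprocal irreducible $G\in\F_q[x]$; this is the first assertion. Note that the theorem requires $s=n>2$, so the argument is valid for odd $n\ge 3$, and the degenerate case $n=1$ (where the biconditional can fail for $q$ odd, e.g.\ $f=x+1$) must be inspected separately.

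For the enumeration I would upgrade this equivalence to an explicit counting correspondence. From the proof of (iii)$\Rightarrow$(i) in Theorem~\ref{thm:equivalence}, a $2n$-degree self-reciprocal irreducible $G\in\F_q[x]$ splits over $\F_{q^{d_0}}=\F_{q^2}$ into exactly $d_0=2$ irreducible factors of degree $n$, namely $f$ and $\sigma_1(f)$, and both are SCRIM. These factors are distinct, for otherwise $f\in\F_q[x]$ would be a degree-$n$ factor of the irreducible $G$ of degree $2n>n$. Conversely, each $n$-degree SCRIM $f$ determines a unique such $G$, its minimal polynomial $f\cdot\sigma_1(f)$ over $\F_q$. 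Hence $f\mapsto f\cdot\sigma_1(f)$ is a well-defined two-to-one surjection from the set of $n$-degree SCRIM polynomials onto the set of $2n$-degree SRIM polynomials, so the number of $n$-degree SCRIM's equals twice the number of $2n$-degree SRIM's.

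It then remains to count the $2n$-degree SRIM polynomials over $\F_q$ for odd $n$. The cleanest self-contained route passes to roots: a $2n$-degree monic irreducible $G\in\F_q[x]$ is self-reciprocal exactly when its roots $\gamma$ satisfy $\gamma^{q^n+1}=1$, so I would count the $\gamma\in\F_{q^{2n}}$ of degree exactly $2n$ over $\F_q$ that lie in the group $\mu_{q^n+1}$ of $(q^n+1)$-th roots of unity, and divide by $2n$. Using that for odd $n$ every $\gamma\in\mu_{q^n+1}$ has degree $2d$ over $\F_q$ for some $d\mid n$ (the only exceptions being $\gamma=\pm1$, which lie in $\F_q$), the identity $\sum_{d\mid n}2d\cdot(\#\,2d\text{-degree SRIM})=|\mu_{q^n+1}|-\gcd(q^n+1,2)$ holds, and Möbius inversion yields $\#\{2n\text{-degree SRIM}\}=\frac{1}{2n}\sum_{d\mid n}\mu(d)q^{n/d}$, the correction term disappearing after inversion since $n>1$; alternatively one may simply quote the classical SRIM count of~\cite{MG90}. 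Doubling gives $\frac{1}{n}\sum_{d\mid n}\mu(d)q^{n/d}$, as claimed. I expect the main obstacle to be exactly this last enumeration — the $\gcd$ and parity bookkeeping needed to show that the elements of $\mu_{q^n+1}$ distribute over the subfields $\F_{q^{2d}}$ as required, together with the $\pm1$ and $n=1$ subtleties — whereas the structural equivalence and the two-to-one correspondence are essentially immediate once Theorem~\ref{thm:equivalence} is in hand.
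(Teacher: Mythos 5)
Your proposal is correct and follows essentially the same route as the paper: specialize Theorem~\ref{thm:equivalence} to the involution $[B]$ to get the equivalence, deduce the two-to-one correspondence $a(n)=2b(n)$ between $n$-degree SCRIMs and $2n$-degree SRIMs, and then invoke the SRIM count of~\cite{MG90} (the paper quotes Theorem~3 there rather than rederiving it by root-counting, but you offer that citation as an alternative anyway). Your observation that the argument really needs $n>2$, so that $n=1$ must be excluded or checked separately, is a fair point that the paper's own proof glosses over.
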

\begin{proof}
The fact that the $n$-degree SCRIM polynomials over $\F_{q^2}$ are exactly the irreducible factors of the $2n$-degree self reciprocal polynomials over $\F_q$ follows directly from Theorem~\ref{thm:equivalence}. Therefore, if $a(n)$ is the number of $n$-degree SCRIM's and $b(n)$ is the number of $2n$-degree self reciprocal irreducible monic polynomials over $\F_q$, we have that $a(n)=2\cdot b(n)$. In addition, according to Theorem 3 of~\cite{MG90}, for $n$ odd,
$$b(n)=\frac{1}{2n}\sum_{d|n}\mu(d)q^{n/d},$$
and the result follows.
\end{proof}

We observe that, in the notation of Theorem~\ref{thm:main}, the number of $s$-degree SCRIM's equals $n_B(s)$ and the same theorem provides $n_B(s)\approx \frac{q^s}{s}$ if $s$ is odd. This asymptotic formula agrees with the exact formula given in Corollary~\ref{cor:scrim}. 

\begin{remark}
We comment that Corollary~\ref{cor:scrim} also suggests a way of constructing SCRIM's of any degree. For this, we just pick a self-reciprocal irreducible monic polynomial $f\in \F_q[x]$ of degree $2n$ and so $f$ factors over $\F_{q^2}$ as $f(x)=g(x)\sigma_1(g(x))$ with $g\in \F_{q^2}[x]$ an $n$-degree irreducible polynomial. Then, from Theorem~\ref{thm:equivalence}, both $g$ and $\sigma_1(g)$ are SCRIM polynomials of degree $n$. The construction of self-reciprocal polynomials over finite fields is discussed in~\cite{MG90}.
\end{remark}

\subsubsection{Additional Remarks}
We observe that Theorem~\ref{thm:equivalence} can be applied in a more general situation. In fact, Corollary~\ref{cor:scrim} follows from the fact that the SCRIM polynomials are the $\mathfrak g$-invariants for an element $\mathfrak g$ of the form $[B, \sigma_1]$, where $[B]$ is an involution. In particular, using Theorem~\ref{thm:main} and following the proof of Corollary~\ref{cor:scrim}, we obtain the following result.

\begin{cor}\label{cor:involution}
Let $[B]\in \PGL(2, q)$ be an element of order $2$. Then the $[B, \sigma_1]$-invariant polynomials of $\F_{q^2}$ are of odd degree or of degree at most two. In addition, for each $n\ge 3$ odd, the $n$-degree $[B, \sigma_1]$-invariants are exactly the irreducible factors (over $\F_{q^2}$) of the $2n$-degree $[B]$-invariants (over $\F_q$). 
\end{cor}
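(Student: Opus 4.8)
The plan is to deduce Corollary~\ref{cor:involution} directly from Theorem~\ref{thm:equivalence} together with the degree constraints already established for the special subgroup $\PGL(2, q)\times\Gal(\fqn/\F_q)$. Since $[B]\in\PGL(2,q)$ is an involution, we have $d=\ord([B])=2$, and we specialize to $n=2$, so that $\fqn=\F_{q^2}$ and $\Gal(\F_{q^2}/\F_q)=\{\sigma_0,\sigma_1\}$. The only admissible index is $i=1$ (since we need $\gcd(i,n)=1$ with $1\le i\le n=2$), so there is no ambiguity in the choice of Frobenius, and the relevant element is exactly $\mathfrak g_0=[B,\sigma_1]$. First I would compute the constants appearing in Theorem~\ref{thm:main}: here $d_0=\gcd(d,n)=\gcd(2,2)=2$, hence $D=\ord([B^*])=d/d_0=1$.

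Next I would handle the degree statement. As noted in the text immediately preceding Theorem~\ref{thm:equivalence}, following Propositions~\ref{prop:equiv} and~\ref{prop:degrees}, every $[B,\sigma_1]$-invariant of degree greater than two has degree of the form $\frac{d}{d_0}\cdot s=s$ with $\gcd(s,n)=\gcd(s,2)=1$, i.e.\ $s$ odd. Thus every invariant of degree $>2$ has odd degree, which is precisely the first assertion (invariants are of odd degree or of degree at most two). This requires only quoting the degree restriction already derived; no new computation is needed.

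For the factorization statement, I would fix $n\ge 3$ odd and apply Theorem~\ref{thm:equivalence} with the involution $[B]$, the field $\F_{q^2}$, and the target degree $\frac{d}{d_0}\cdot s=s=n$, which is $>2$ by hypothesis and satisfies $\gcd(n,2)=1$. The equivalence of conditions (i) and (iii) in that theorem states that an $n$-degree monic irreducible $f\in\F_{q^2}[x]$ is $[B,\sigma_i]$-invariant for some admissible $i$ if and only if $f$ divides a $ds=2n$-degree monic irreducible $[B]$-invariant $G\in\F_q[x]$. Since the only admissible index is $i=1$, "$[B,\sigma_i]$-invariant for some admissible $i$" means exactly "$[B,\sigma_1]$-invariant," so the $n$-degree $[B,\sigma_1]$-invariants over $\F_{q^2}$ are precisely the irreducible factors over $\F_{q^2}$ of the $2n$-degree $[B]$-invariants over $\F_q$, as claimed.

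I do not expect any genuine obstacle here, since the corollary is essentially a specialization of Theorem~\ref{thm:equivalence} to the case $d=2$, $n$ general odd, with the field replaced by $\F_{q^2}$; this parallels exactly the argument of Corollary~\ref{cor:scrim}, where $[B]$ is the specific involution $\left(\begin{smallmatrix}0&1\\1&0\end{smallmatrix}\right)$ and $[B]$-invariants are the SRIM polynomials. The only point meriting a moment's care is confirming that the hypotheses of Theorem~\ref{thm:equivalence} are met for a \emph{general} involution $[B]$ rather than the reciprocal one, and that the uniqueness of the admissible Frobenius index $i=1$ (forced by $n=2$ in the SCRIM setting, and by the coprimality requirement in general) lets us drop the existential quantifier over $i$ in the statement. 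Once these bookkeeping details are verified, the result follows immediately.
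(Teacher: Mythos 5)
Your proposal is correct and follows exactly the route the paper intends: the paper gives no separate proof of Corollary~\ref{cor:involution}, stating only that it follows from Theorem~\ref{thm:main} by the argument of Corollary~\ref{cor:scrim}, and your specialization (extension degree $2$, $d=d_0=2$, $D=1$, so invariants of degree $>2$ have odd degree $s$, then Theorem~\ref{thm:equivalence} (i)$\Leftrightarrow$(iii) with the unique admissible index $i=1$) is precisely that argument with the bookkeeping made explicit. No gaps.
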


In particular, if $[B]\in\PGL(2, q)$ is an involution and $n>1$ is odd, the number of $n$-degree $[B, \sigma_1]$-invariants over $\F_{q^2}$ is exactly twice the number of $2n$-degree $[B]$-invariants over $\F_q$. Exact enumeration formulas for the number of $[B]$-invariants for a generic involution $[B]\in \PGL(2, q)$ is given in~\cite{MP17}.

\begin{center}{\bf Acknowledgments}\end{center}
The second author was partially supported by FAPEMIG  APQ-02973-17, Brazil. 
The third author was supported by FAPESP 2018/03038-2, Brazil.



\begin{thebibliography}{1}
\bibitem{BJU}
A. Boripan, S.~Jitman,
P. Udomkavanich
\newblock Self-conjugate-reciprocal irreducible monic polynomials over finite fields, in: Proceedings of the 20th Annual Meeting in Mathematics 2015. Department of Mathematics, Faculty of Science, Silpakorn University, Nakhom Pathon, 2015, pp. 35--43.


\bibitem{BJU-2}
A. Boripan, S.~Jitman,
P. Udomkavanich
\newblock Self-conjugate-reciprocal irreducible monic factors of $x^n-1$ over finite fields and their applications.
\newblock {\em Finite Fields Appl.} 55 (2019) 78--96.

\bibitem{Gar11}
T.~Garefalakis.
\newblock On the action of ${\GL}(2,q)$ on irreducible polynomials over $\F_q$.
\newblock {\em J. Pure Appl. Algebra} 215 (2011) 1835--1843.



\bibitem{MP17}
S.~Mattarei, M.~Pizzato.
\newblock Generalizations of self-reciprocal polynomials
\newblock{\em Finite Fields Appl.} 48 (2017) 271--288.

\bibitem{MG90}
H.~Meyn and W.~G\"otz.
\newblock Self-reciprocal polynomials over finite fields.
\newblock {\em Publ. Inst. Rech. Math. Av.} 413/S-21 (1990) 82--90.


\bibitem{LR17} L.~Reis.
\newblock The action of $\GL_2(\F_q)$ on irreducible polynomials over $\F_q$, revisited.
\newblock {\em J. Pure Appl. Algebra} 222 (2018) 1087--1094. 

\bibitem{ST12}
H.~Stichtenoth and A.~Topuzo\u{g}lu.
\newblock Factorization of a class of polynomials over finite fields.
\newblock {\em Finite Fields Appl.} 18 (2012) 108--122.
\end{thebibliography}
\end{document}